\setlist[enumerate]{nosep}
\definecolor{labelkey}{rgb}{0,0.08,0.45}
\definecolor{refkey}{rgb}{0,0.6,0.0}
\definecolor{Brown}{rgb}{0.45,0.0,0.05}
\definecolor{lime}{rgb}{0.00,0.8,0.0}
\definecolor{lblue}{rgb}{0.5,0.5,0.99}
\colorlet{hlcyan}{cyan!30}
\colorlet{hlred}{red!40}
\def\namedlabel#1#2{\begingroup
   \def\@currentlabel{#2}%
   \label{#1}\endgroup
}
\newcommand{\vast}{\bBigg@{4}}
\newcommand{\Vast}{\bBigg@{5}}
\newcommand{\Hc}{\ensuremath{{\mathcal{H}}}}
\newcommand{\bx}{\ensuremath{\mathbf{x}}}
\newcommand{\bz}{\ensuremath{\mathbf{z}}}
\newcommand{\weakly}{\ensuremath{\:{\rightharpoonup}\:}}
\newcommand{\kkk}{\ensuremath{{k\in{\mathbb N}}}}
\newcommand{\menge}[2]{\big\{{#1}~\big |~{#2}\big\}}
\newcommand{\fenv}[1]%
{\ensuremath{\,\overrightarrow{\operatorname{env}}_{#1}}}
\newcommand{\benv}[1]%
{\ensuremath{\,\overleftarrow{\operatorname{env}}_{#1}}}
\newcommand{\scal}[2]{\left\langle{#1},{#2}  \right\rangle}
\newcommand{\RR}{\ensuremath{\mathbb R}}
\newcommand{\ran}{\ensuremath{{\operatorname{ran}}\,}}
\newcommand{\zer}{\ensuremath{\operatorname{zer}}}
\newcommand{\Fix}{\ensuremath{\operatorname{Fix}}}
\newcommand{\Id}{\ensuremath{\operatorname{Id}}}
\newcommand{\tryu}{\ensuremath{T_{\text{\scriptsize Ryu}}}}
\newcommand{\tmt}{\ensuremath{T_{\text{\scriptsize MT}}}}
\crefname{figure}{Figure}{Figures}
\crefname{equation}{}{equations}
\crefname{chapter}{Appendix}{chapters}
\crefname{item}{}{items}
\crefname{enumi}{}{}
\theoremstyle{definition}
\newtheorem{theorem}{Theorem}[section]
\newtheorem{lemma}[theorem]{Lemma}
\newtheorem{corollary}[theorem]{Corollary}
\newtheorem{example}[theorem]{Example}
\newtheorem{fact}[theorem]{Fact}
\newtheorem{remark}[theorem]{Remark}
\DeclarePairedDelimiter{\lVerts}{\lVert}{\rVert}
\newcommand{\norm}[1]{\ensuremath{\lVerts*{#1}}}
\providecommand{\RR}{\mathbb{R}}
\providecommand{\ran}{\operatorname{ran}}
\newcommand{\fix}{\ensuremath{\operatorname{Fix}}}
\providecommand{\Id}{\operatorname{{ Id}}}
\providecommand{\fix}{\operatorname{Fix}}
\providecommand{\ran}{\operatorname{ran}}
\providecommand{\Id}{\operatorname{Id}}
\providecommand{\zer}{\operatorname{zer}}
\providecommand{\RR}{\mathbb{R}}
\definecolor{myblue}{rgb}{.8, .8, 1}
\begin{document}

\title{\textsf{
The splitting algorithms by~Ryu and by~Malitsky-Tam 
applied to normal cones of linear subspaces
converge strongly to the projection onto the intersection 
}}

\author{
Heinz H.\ Bauschke\thanks{
Mathematics, University
of British Columbia,
Kelowna, B.C.\ V1V~1V7, Canada. E-mail:
\texttt{heinz.bauschke@ubc.ca}.},~
Shambhavi Singh\thanks{
Mathematics, University
of British Columbia,
Kelowna, B.C.\ V1V~1V7, Canada. E-mail:
\texttt{sambha@student.ubc.ca}.},~ 
and
Xianfu Wang\thanks{
Mathematics, University
of British Columbia,
Kelowna, B.C.\ V1V~1V7, Canada. E-mail:
\texttt{shawn.wang@ubc.ca}.}
}

\date{September 22, 2021} 
\maketitle

\vskip 8mm

\begin{abstract} 
Finding a zero of a sum of maximally monotone operators is a fundamental
problem in modern optimization and nonsmooth analysis. 
Assuming that resolvents of the operators are available, this problem
can be tackled with the Douglas-Rachford algorithm.
However, when dealing with three or more operators, one must 
work in a product space with as many factors as there are operators.
In groundbreaking recent work by Ryu and by Malitsky and Tam, it was 
shown that the number of factors can be reduced by one. 
These splitting methods guarantee
\emph{weak} convergence to \emph{some} solution
of the underlying sum problem; strong convergence holds in
the presence of uniform monotonicity.

In this paper, we provide a case study when the operators involved
are normal cone operators of subspaces and the solution set is thus
the intersection of the subspaces. Even though these operators 
lack strict convexity, we show that striking  
conclusions are available in this case: \emph{strong} (instead of weak) 
convergence and
the solution obtained is (not arbitrary but) the \emph{projection onto the intersection}.
Numerical experiments to illustrate our results are also provided.
\end{abstract}

{\small
\noindent
{\bfseries 2020 Mathematics Subject Classification:}
{Primary 
41A50,
49M27,
65K05, % is numerical mathematical programming 
47H05; % monotone operators 
Secondary 
15A10,
47H09,
49M37,
90C25. 
% nonexpansive maps
%90C25. % the usual
}

{\small
\noindent {\bfseries Keywords:}
best approximation,
Hilbert space, 
intersection of subspaces, 
linear convergence, 
Malitsky-Tam splitting,
maximally monotone operator, 
nonexpansive mapping, 
resolvent,
Ryu splitting.
}
}

\section{Introduction}

Throughout the paper, we assume that 
\begin{equation}
\text{$X$ is a real Hilbert space}
\end{equation}
with inner product $\scal{\cdot}{\cdot}$ and induced norm $\|\cdot\|$. 
Let $A_1,\ldots,A_n$ be maximally monotone operators on $X$.
(See, e.g., \cite{BC2017} for background on maximally monotone operators.)
One central problem in modern optimization and nonsmooth analysis asks to
\begin{equation}
\label{e:genprob}
\text{find $x\in X$ such that $0\in(A_1+\cdots+A_n)x$.}
\end{equation}
In general, solving \cref{e:genprob} may be quite hard.
Luckily, in many interesting cases, we have access to the 
\emph{firmly nonexpansive resolvents} $J_{A_i} := (\Id+A_i)^{-1}$
which opens the door to employ splitting algorithms to solve 
\cref{e:genprob}. 
The most famous instance is the \emph{Douglas-Rachford algorithm} \cite{DougRach} 
whose importance for this problem was brought to light in the seminal paper 
by Lions and Mercier \cite{LionsMercier}. However, 
the Douglas-Rachford algorithm requires that $n=2$; if $n\geq 3$,
one may employ the Douglas-Rachford algorithm to a reformulation
in the product space $X^n$ \cite[Section~2.2]{Comb09}. 
In recent breakthrough work by Ryu \cite{Ryu}, it was shown 
that for $n=3$ one may formulate an algorithm that works 
in $X^2$ rather than $X^3$. 
We will refer to this method as \emph{Ryu's algorithm}. 
Very recently, Malitsky and Tam proposed in \cite{MT} 
an algorithm for a general $n\geq 3$
that is different from Ryu's and that operators in $X^{n-1}$.
(No algorithms exist in product spaces
featuring fewer factors than $n-1$ factors in a certain technical sense.)
We will review these algorithms in \cref{sec:known} below.
Both Ryu's and the Malitsky-Tam algorithm are known to produce \emph{some} solution 
to \cref{e:genprob} via a sequence that converges \emph{weakly}. 
Strong convergence holds in the presence of uniform monotonicity. 

\emph{The aim of this paper is provide a case study for the situation
when the maximally monotone operators $A_i$ are normal cone operators 
of closed linear subspaces $U_i$ of $X$.}
These operators are not even strictly monotone. 
Our main results show that the splitting algorithms by Ryu and
by Malitsky-Tam actually produce a sequence 
that converges \emph{strongly} and we are able to \emph{identify the limit} to
be the \emph{projection} onto the intersection
$U_1\cap\cdots\cap U_n$!
The proofs of these results rely on the explicit identification of the fixed point
set of the underlying Ryu and Malitsky-Tam operators.
Moreover, a standard translation technique gives the same result for
\emph{affine} subspaces of $X$ provided their intersection is nonempty.

The paper is organized as follows.
In \cref{sec:aux}, we collect various auxiliary results for later use.
The known convergence results on Ryu splitting and on 
Malitsky-Tam splitting are reviewed in \cref{sec:known}.
Our main results are presented in \cref{sec:main}.
Matrix representations of the various operators involved
are provided in \cref{sec:matrix}. 
These are useful for our numerical experiments in \cref{sec:numexp}.
Finally, we offer some concluding remarks in \cref{sec:end}.

The notation employed in this paper is 
standard and follows largely \cite{BC2017}. 
When $z=x+y$ and $x\perp y$, then we also write
$z=x\oplus y$ to stress this fact.
Analogously for the Minkowski sum $Z=X+Y$, we
write $Z= X\oplus Y$ as well as $P_Z = P_X\oplus P_Y$ if $X\perp Y$.

\section{Auxiliary results}
\label{sec:aux}

In this section, we collect useful properties of projection operators
and results on iterating linear/affine nonexpansive operators. 
We start with projection operators.

\subsection{Projections}

\begin{fact}
\label{f:orthoP}
Suppose $U$ and $V$ are nonempty closed convex subsets of $X$ such that
$U\perp V$. Then
$U\oplus V$ is a nonempty closed subset of $X$ and 
\begin{equation}
%(\forall x\in\Hc)\quad
%P_{U+V}x = P_Ux+P_Vx
P_{U\oplus V} = P_U\oplus P_V
\end{equation}
\end{fact}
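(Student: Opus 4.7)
The plan is to prove this in two steps: first establish closedness (and nonemptiness) of $U\oplus V$, then verify the projection identity via the standard variational characterization.

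The key observation underpinning both parts is that the hypothesis $U\perp V$ (meaning $\scal{u}{v}=0$ for all $u\in U$, $v\in V$) propagates to differences: for any $u_1,u_2\in U$ and $v_1,v_2\in V$, one has
\begin{equation*}
\scal{u_1-u_2}{v_1-v_2} = \scal{u_1}{v_1}-\scal{u_1}{v_2}-\scal{u_2}{v_1}+\scal{u_2}{v_2}=0.
\end{equation*}
This means the Pythagorean identity $\|u+v\|^2 = \|u\|^2+\|v\|^2$ upgrades to
$\|(u_1-u_2)+(v_1-v_2)\|^2 = \|u_1-u_2\|^2+\|v_1-v_2\|^2$.

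For closedness, I would take a convergent sequence $z_n = u_n+v_n \to z$ with $u_n\in U, v_n\in V$, and apply the Pythagorean identity above to $\|z_n - z_m\|^2 = \|u_n-u_m\|^2 + \|v_n-v_m\|^2$. Since $(z_n)$ is Cauchy, so are $(u_n)$ and $(v_n)$ individually, and they converge to limits that lie in $U$ and $V$ respectively by closedness of those sets. Then $z = \lim u_n + \lim v_n \in U+V$.

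For the projection identity, I would use the characterization $p = P_C x \Leftrightarrow p\in C \text{ and } \scal{x-p}{w-p}\leq 0$ for all $w\in C$, applied to $C = U\oplus V$ with candidate $p = P_U x + P_V x$. Writing an arbitrary $w\in U\oplus V$ as $w = u+v$, one expands
\begin{equation*}
\scal{x-P_U x-P_V x}{(u-P_U x)+(v-P_V x)}
\end{equation*}
and observes that the two cross terms $\scal{P_V x}{u-P_U x}$ and $\scal{P_U x}{v-P_V x}$ both vanish by the orthogonality hypothesis. What remains is $\scal{x-P_U x}{u-P_U x} + \scal{x-P_V x}{v-P_V x}$, each summand being $\leq 0$ by the defining variational inequalities for $P_U x$ and $P_V x$ respectively.

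Neither step presents a serious obstacle; the only subtlety is noticing that pointwise orthogonality upgrades to orthogonality on differences, which is what makes both the Cauchy argument and the cross-term cancellation go through cleanly.
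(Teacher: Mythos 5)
Your proof is correct. The paper itself does not argue this fact; it simply cites \cite[Proposition~29.6]{BC2017}, so your write-up is a self-contained derivation of the cited result rather than a divergence from the paper's approach. Both halves are sound: the upgrade of pointwise orthogonality to orthogonality of differences is exactly the right observation, it makes the Cauchy argument for closedness work, and it kills the cross terms in the variational inequality so that the candidate $P_Ux+P_Vx$ is certified as the projection. The only step you use silently is that $U+V$ is \emph{convex} (needed for the variational characterization of the projection to apply); this is immediate since a Minkowski sum of convex sets is convex, but it is worth one line. Likewise, nonemptiness is trivial from $U\neq\varnothing$ and $V\neq\varnothing$, and the orthogonality $P_Ux\perp P_Vx$ justifies writing the conclusion with the symbol $\oplus$ in the paper's sense.
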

\begin{proof}
See \cite[Proposition~29.6]{BC2017}. 
\end{proof}

Here is a well known illustration of \cref{f:orthoP} which
we will use repeatedly in the paper (sometimes without explicit mentioning). 

\begin{example}
\label{ex:perp}
Suppose $U$ is a closed linear subspace of $X$.
Then
\begin{equation}
P_{U^\perp} = \Id-P_U.
\end{equation}
\end{example}
\begin{proof}
The orthogonal complement $V := U^\perp$ satisfies $U\perp V$
and also $U+V=X$; thus $P_{U+V}=\Id$ and the result follows.
\end{proof}

\begin{fact}[\bf Anderson-Duffin]
\label{f:AD}
Suppose that $X$ is finite-dimensional and that 
$U,V$ are two linear subspaces of $X$.
Then
\begin{equation}
P_{U\cap V} = 2P_U(P_U+P_V)^\dagger P_V,
\end{equation}
where ``$^\dagger$'' denotes the Moore-Penrose inverse of a matrix.
\end{fact}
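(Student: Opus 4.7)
The plan is to set $T := P_U + P_V$ and $S := 2 P_U T^\dagger P_V$, then verify three properties that together force $S = P_{U\cap V}$: (i) $S$ is self-adjoint, (ii) $\ran S \subseteq U \cap V$, and (iii) $S x = x$ for every $x \in U \cap V$. Once these are in hand, self-adjointness combined with (ii) yields $\ker S = (\ran S)^\perp \supseteq (U\cap V)^\perp$, so $S$ vanishes on $(U\cap V)^\perp$; combined with (iii), $S$ and $P_{U\cap V}$ agree on both $U\cap V$ and its orthogonal complement, hence on all of $X$.

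First I would record two standard facts about $T$: since $T$ is self-adjoint and positive semidefinite with $\langle T z , z \rangle = \|P_U z\|^2 + \|P_V z\|^2$, we have $\ker T = U^\perp \cap V^\perp = (U+V)^\perp$, and consequently $\ran T = U + V$, $T T^\dagger = T^\dagger T = P_{U+V}$, and $T^\dagger$ is self-adjoint.

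The crucial algebraic move is to rewrite $S$ in a second, symmetric form. Substituting $P_U = T - P_V$ and using $T T^\dagger P_V = P_{U+V} P_V = P_V$, one obtains
\[
S = 2(T - P_V) T^\dagger P_V = 2 T T^\dagger P_V - 2 P_V T^\dagger P_V = 2 P_V - 2 P_V T^\dagger P_V.
\]
From this representation, (i) is immediate because $P_V T^\dagger P_V$ is self-adjoint, and one reads off $\ran S \subseteq V$; combined with the obvious $\ran S \subseteq U$ from the original definition, this yields (ii). For (iii), any $x \in U \cap V$ satisfies $P_U x = P_V x = x$, so $T x = 2x$; since $x \in U + V = (\ker T)^\perp$, this forces $T^\dagger x = \tfrac{1}{2} x$, and therefore $S x = 2 P_U T^\dagger P_V x = 2 P_U (\tfrac{1}{2} x) = P_U x = x$.

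I do not expect a serious obstacle here. The only step that is not wholly mechanical is spotting the symmetrization of $S$ via $P_U = T - P_V$; once that trick is in place, everything else follows from routine properties of the Moore--Penrose inverse of a self-adjoint matrix together with the identification of $\ker T$ and $\ran T$.
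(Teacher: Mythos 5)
Your proof is correct, and every step checks out: $\ker T = U^\perp\cap V^\perp$ via $\scal{Tz}{z}=\|P_Uz\|^2+\|P_Vz\|^2$, the symmetrization $S=2P_V-2P_VT^\dagger P_V$ via $P_U=T-P_V$ and $TT^\dagger P_V=P_{U+V}P_V=P_V$, and the fixed-point computation $T^\dagger x=\tfrac12 x$ for $x\in U\cap V\subseteq(\ker T)^\perp$ are all sound, and the three properties (self-adjoint, range in $U\cap V$, identity on $U\cap V$) do characterize $P_{U\cap V}$. Note, however, that the paper does not prove this statement at all: it is recorded as a Fact with a pointer to \cite[Corollary~25.38]{BC2017} and to the original Anderson--Duffin paper \cite{AD}, where the formula arises within the theory of parallel addition of positive semidefinite operators (the parallel sum $A(A+B)^\dagger B$) rather than by direct verification. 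So your argument is genuinely different in character: it is an elementary, self-contained check of the defining properties of an orthogonal projector, requiring only the standard identities $TT^\dagger=T^\dagger T=P_{\ran T}$ for a self-adjoint $T$. What the citation route buys is generality and context (the parallel-sum calculus, monotonicity properties, infinite-dimensional variants with closures); what your route buys is a short, transparent proof that could replace the citation if the paper needed to be self-contained.
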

\begin{proof}
See, e.g., \cite[Corollary~25.38]{BC2017} or
the original \cite{AD}. 
\end{proof}

\begin{corollary}
\label{c:AD3}
Suppose that $X$ is finite-dimensional and that 
$U,V,W$ are three linear subspaces of $X$.
Then
\begin{equation}
P_{U\cap V\cap W} = 4P_U(P_U+P_V)^\dagger P_V\big(2P_U(P_U+P_V)^\dagger P_V+P_W\big)^\dagger P_W. 
\end{equation}
\end{corollary}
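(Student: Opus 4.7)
The plan is to apply \cref{f:AD} twice by grouping the three subspaces as $(U\cap V)\cap W$. First I would set $U' := U\cap V$, which is a closed linear subspace of $X$ since the intersection of two linear subspaces is a linear subspace. Applying \cref{f:AD} directly to the two subspaces $U$ and $V$ gives
\begin{equation*}
P_{U'} = P_{U\cap V} = 2P_U(P_U+P_V)^\dagger P_V.
\end{equation*}

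Next I would apply \cref{f:AD} a second time, now to the pair of linear subspaces $U'$ and $W$, obtaining
\begin{equation*}
P_{U\cap V\cap W} = P_{U'\cap W} = 2P_{U'}(P_{U'}+P_W)^\dagger P_W.
\end{equation*}
Substituting the expression for $P_{U'}$ from the first step into this formula yields
\begin{equation*}
P_{U\cap V\cap W} = 2\bigl[2P_U(P_U+P_V)^\dagger P_V\bigr]\Bigl(2P_U(P_U+P_V)^\dagger P_V + P_W\Bigr)^\dagger P_W,
\end{equation*}
which is precisely the claimed identity after pulling the constant factor $2\cdot 2 = 4$ to the front.

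The only conceptual point to verify is that \cref{f:AD} is legitimately applicable in the second step, i.e.\ that $U' = U\cap V$ really is a linear subspace (not merely a convex set) so that $P_{U'}$ is itself an orthogonal projection onto a subspace; this is immediate. There is no genuine obstacle here, since finite-dimensionality ensures all Moore--Penrose inverses involved exist and the associativity of intersection $U\cap V\cap W = (U\cap V)\cap W$ makes the iteration transparent.
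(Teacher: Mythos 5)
Your proposal is correct and follows exactly the paper's own argument: apply the Anderson--Duffin formula (\cref{f:AD}) first to the pair $(U,V)$ and then again to the pair $(U\cap V, W)$, substituting the first expression into the second. The extra details you spell out (that $U\cap V$ is a subspace, and that $2\cdot 2=4$) are routine and consistent with what the paper leaves implicit.
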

\begin{proof}
Use \cref{f:AD} to find $P_{U\cap V}$,
and then use \cref{f:AD} again on $(U\cap V,W)$. 
\end{proof}

\begin{corollary}
\label{c:ADsum}
Suppose that $X$ is finite-dimensional and that $U,V$ are two linear
subspaces of $X$. 
Then
\begin{subequations}
\begin{align}
P_{U+V} &= \Id-2P_{U^\perp}(P_{U^\perp}+P_{V^\perp})^\dagger P_{V^\perp}\\
&=\Id-2(\Id-P_U)\big(2\Id-P_U-P_V \big)^\dagger(\Id-P_V).
\end{align}
\end{subequations}
\end{corollary}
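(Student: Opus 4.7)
The plan is to reduce the sum $U+V$ to an intersection via orthogonal complements, and then invoke the Anderson–Duffin formula (\cref{f:AD}).

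The key identity I would use is the standard fact that for closed linear subspaces $U,V$ of a finite-dimensional $X$, one has $(U+V)^\perp = U^\perp \cap V^\perp$. Combining this with \cref{ex:perp}, which gives $P_{W^\perp} = \Id - P_W$ for any closed subspace $W$, yields
\begin{equation*}
P_{U+V} = \Id - P_{(U+V)^\perp} = \Id - P_{U^\perp \cap V^\perp}.
\end{equation*}

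Next I would apply \cref{f:AD} to the pair of subspaces $U^\perp$ and $V^\perp$ to obtain
\begin{equation*}
P_{U^\perp \cap V^\perp} = 2 P_{U^\perp}(P_{U^\perp} + P_{V^\perp})^\dagger P_{V^\perp},
\end{equation*}
which, substituted above, gives the first displayed equality of the corollary. For the second equality, I would once more invoke \cref{ex:perp} to rewrite $P_{U^\perp} = \Id - P_U$ and $P_{V^\perp} = \Id - P_V$, so that $P_{U^\perp} + P_{V^\perp} = 2\Id - P_U - P_V$, delivering the second form.

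There is essentially no obstacle: the argument is a two-line manipulation once the complement identity $(U+V)^\perp = U^\perp \cap V^\perp$ is in hand. The only point that deserves a moment of attention is that \cref{f:AD} requires the finite-dimensionality hypothesis, which is already assumed in the corollary, so the application is legitimate.
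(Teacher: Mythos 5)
Your proposal is correct and follows essentially the same route as the paper: pass to the orthogonal complement via $U+V=(U^\perp\cap V^\perp)^\perp$, apply \cref{f:AD} to $(U^\perp,V^\perp)$, and then use \cref{ex:perp} to rewrite the projectors. No gaps.
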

\begin{proof}
Indeed, $U+V = (U^\perp\cap V^\perp)^\perp$ and so 
$P_{U+V} = \Id - P_{U^\perp\cap V^\perp}$.
Now apply \cref{f:AD} to $(U^\perp,V^\perp)$
followed by \cref{ex:perp}. 
\end{proof}

\begin{fact}
\label{f:Pran}
Let $Y$ be a real Hilbert space, and 
let $A\colon X\to Y$ be a continuous linear operator with closed range.
Then 
\begin{equation}
P_{\ran A} = AA^\dagger. 
\end{equation}
\end{fact}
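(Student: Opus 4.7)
The plan is to verify that $AA^\dagger$ satisfies the three characterizing properties of the orthogonal projection onto the closed subspace $\ran A$: namely, (a) $\ran(AA^\dagger) \subseteq \ran A$, (b) $AA^\dagger$ fixes every element of $\ran A$, and (c) $AA^\dagger$ is self-adjoint. Since the orthogonal projection onto a closed subspace is the unique self-adjoint idempotent linear operator with that range, combining (a)--(c) with idempotence would immediately identify $AA^\dagger$ with $P_{\ran A}$.

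I would first invoke the closed-range hypothesis to guarantee that the Moore--Penrose inverse $A^\dagger \colon Y \to X$ is well defined as a bounded linear operator, so that the Moore--Penrose identities
\begin{equation}
A A^\dagger A = A, \quad A^\dagger A A^\dagger = A^\dagger, \quad (AA^\dagger)^* = AA^\dagger, \quad (A^\dagger A)^* = A^\dagger A
\end{equation}
are at our disposal. Property (a) is then immediate from the factorization. For (b), given $y \in \ran A$, write $y = Ax$ and compute $AA^\dagger y = (AA^\dagger A)x = Ax = y$ via the first identity. Property (c) is exactly the third identity. Idempotence follows from $(AA^\dagger)(AA^\dagger) = A(A^\dagger A A^\dagger) = AA^\dagger$ using the second identity. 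Any bounded self-adjoint idempotent operator on a Hilbert space is the orthogonal projection onto its range, and by (a) and (b) that range is precisely $\ran A$.

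There is no real obstacle here, since everything is a direct consequence of the Moore--Penrose axioms. The only subtle point worth flagging is the role of the closed-range assumption: without it $A^\dagger$ need not be a bounded operator on all of $Y$, and the identities above would only hold on $\ran A \oplus (\ran A)^\perp = Y$ after careful interpretation. With closed range this subtlety disappears, and the argument reduces to the three-line verification above.
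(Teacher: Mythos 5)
Your proof is correct: the verification that $AA^\dagger$ is a bounded self-adjoint idempotent whose range is exactly $\ran A$ (using the closed-range hypothesis to guarantee $A^\dagger$ is bounded and the Penrose identities hold) is the standard argument, and it identifies $AA^\dagger$ with $P_{\ran A}$ as claimed. The paper offers no proof of its own here --- it simply cites \cite[Proposition~3.30(ii)]{BC2017} --- so your self-contained derivation is essentially the argument behind that reference.
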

\begin{proof}
See, e.g., \cite[Proposition~3.30(ii)]{BC2017}. 
\end{proof}

\subsection{Linear (and affine) nonexpansive iterations}

We now turn results on iterating linear or affine nonexpansive operators.

\begin{fact}
\label{f:convasymp}
Let $L\colon X\to X$ be linear and nonexpansive, 
and let $x\in X$. 
Then 
\begin{equation}
L^kx \to P_{\Fix L}(x)
\quad\Leftrightarrow\quad
L^kx - L^{k+1}x \to 0. 
\end{equation}
\end{fact}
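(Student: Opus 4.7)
The forward implication is immediate from continuity of $L$: if $L^k x \to p := P_{\Fix L}(x)$, then $p \in \Fix L$ gives $L^{k+1} x = L(L^k x) \to L p = p$, so $L^k x - L^{k+1} x \to 0$.

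For the converse, I would set $p := P_{\Fix L}(x)$ and $v := x - p \in (\Fix L)^\perp$; linearity of $L$ together with $L p = p$ yields $L^k x = p + L^k v$, reducing the target to $L^k v \to 0$. The strategy is to combine the von Neumann mean ergodic theorem with the asymptotic regularity hypothesis via a careful $n$-then-$k$ limit.

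For $w \in X$ and $n \in \mathbb{N}$, let $s_n(w) := \tfrac{1}{n}\sum_{j=0}^{n-1} L^j w$ denote the $n$th Cesàro average. By the mean ergodic theorem for nonexpansive linear operators on a Hilbert space, $s_n(v) \to P_{\Fix L}(v) = 0$ strongly as $n \to \infty$. Moreover, since $L$ commutes with $s_n$, the identity $s_n(L^k v) = L^k s_n(v)$ together with $\|L\| \le 1$ gives the \emph{shift-uniform} bound
\[
  \sup_{k} \|s_n(L^k v)\| \le \|s_n(v)\|,
\]
which tends to $0$ as $n \to \infty$. On the other hand, writing $L^k v - s_n(L^k v) = \tfrac{1}{n}\sum_{j=0}^{n-1}(\Id - L^j)L^k v$ and using $\Id - L^j = (\Id - L)(\Id + L + \cdots + L^{j-1})$ together with nonexpansiveness gives
\[
  \|L^k v - s_n(L^k v)\| \le \tfrac{n-1}{2}\,\|L^k v - L^{k+1}v\|.
\]

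The proof is then completed by a balancing act, which is the main subtlety. Given $\varepsilon > 0$, first choose $n$ so large that $\|s_n(v)\| < \varepsilon/2$; then, for this \emph{fixed} $n$, invoke the hypothesis $L^k v - L^{k+1} v \to 0$ to choose $k$ so large that $\tfrac{n-1}{2}\|L^k v - L^{k+1} v\| < \varepsilon/2$. The triangle inequality then yields $\|L^k v\| < \varepsilon$ for all such $k$, hence $L^k v \to 0$, i.e., $L^k x \to p = P_{\Fix L}(x)$ as required.
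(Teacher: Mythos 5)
Your argument is correct, and it is worth noting that the paper itself supplies no proof of this fact --- it only cites Baillon, Baillon--Bruck--Reich, Bauschke--Deutsch--Hundal--Park, and \cite[Proposition~5.28]{BC2017} --- so what you have produced is a genuine self-contained derivation rather than a reproduction. The forward direction is the trivial continuity argument, as it should be. For the converse, the reduction $L^kx = p + L^kv$ with $v = x - P_{\Fix L}x \in (\Fix L)^\perp$ is clean, and the two estimates are both right: $s_n(L^kv) = L^k s_n(v)$ gives the shift-uniform bound $\|s_n(L^kv)\|\le\|s_n(v)\|$, and the telescoping factorization $\Id - L^j = \sum_{i=0}^{j-1}L^i(\Id-L)$ gives $\|L^kv - s_n(L^kv)\|\le\tfrac{1}{n}\sum_{j=0}^{n-1}j\,\|L^kv-L^{k+1}v\| = \tfrac{n-1}{2}\|L^kv-L^{k+1}v\|$. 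The ``choose $n$ first, then $k$'' order of quantifiers is exactly the point where a careless write-up would fail, and you handle it correctly. The one ingredient you import wholesale is the mean ergodic theorem together with the identification of its limit as $P_{\Fix L}$; for a linear contraction on a Hilbert space this rests on $\Fix L = \Fix L^*$ and the orthogonal decomposition $X = \Fix L\oplus\overline{\ran(\Id-L)}$, and since $v\in\overline{\ran(\Id-L)}$ one can even bypass the full ergodic theorem: $s_n(u-Lu)=\tfrac1n(u-L^nu)\to0$ on the dense subset $\ran(\Id-L)$ plus $\|s_n\|\le1$ already gives $s_n(v)\to0$, which is all you use. The classical proofs in the cited sources work instead with monotonicity of $\|L^kx\|$ and of the inner products $\scal{L^kx}{L^{k+m}x}$ (or, in \cite{BBR}, with Banach limits); your Ces\`aro-average route is arguably more transparent and generalizes readily to power-bounded operators on reflexive spaces.
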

\begin{proof}
See \cite[Proposition~4]{Baillon},
\cite[Theorem~1.1]{BBR}, 
\cite[Theorem~2.2]{BDHP}, or 
\cite[Proposition~5.28]{BC2017}. 
(The versions in \cite{Baillon} and \cite{BBR} are much more general.)
\end{proof}

\begin{fact}
\label{f:averasymp}
Let $T\colon X\to X$ be averaged nonexpansive with 
$\Fix T\neq\varnothing$. 
Then $(\forall x\in X)$ 
$T^kx-T^{k+1}x\to 0$. 
\end{fact}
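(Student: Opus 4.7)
The plan is to exploit the standard descent inequality available for averaged nonexpansive operators. Fix a point $y\in\Fix T$, which exists by hypothesis, and write $T = (1-\alpha)\Id + \alpha N$ for some constant $\alpha\in \opint{0,1}$ and some nonexpansive $N\colon X\to X$ (this is the definition of $\alpha$-averagedness). The key tool is the well-known inequality
\begin{equation*}
\|Tx - y\|^2 \;\leq\; \|x-y\|^2 - \tfrac{1-\alpha}{\alpha}\|x - Tx\|^2,
\end{equation*}
valid for every $x\in X$ and every $y\in\Fix T$, which follows from expanding $\|(1-\alpha)(x-y)+\alpha(Nx-y)\|^2$ and using that $N$ is nonexpansive together with $Ny=y$.

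Next, I would apply this inequality along the orbit. Setting $x_k := T^k x$ and $y\in\Fix T$ fixed, we obtain
\begin{equation*}
\|x_{k+1}-y\|^2 \;\leq\; \|x_k-y\|^2 - \tfrac{1-\alpha}{\alpha}\|x_k - x_{k+1}\|^2.
\end{equation*}
In particular the sequence $(\|x_k-y\|^2)_{k\in\NN}$ is nonincreasing and bounded below by $0$, hence convergent. Telescoping the inequality from $k=0$ up to any finite $K$ and rearranging yields
\begin{equation*}
\tfrac{1-\alpha}{\alpha}\sum_{k=0}^{K}\|x_k - x_{k+1}\|^2 \;\leq\; \|x_0-y\|^2 - \|x_{K+1}-y\|^2 \;\leq\; \|x_0-y\|^2.
\end{equation*}
Letting $K\to\infty$ and using $1-\alpha>0$, the series $\sum_{k\geq 0}\|x_k-x_{k+1}\|^2$ is summable, and therefore $x_k - x_{k+1} = T^kx - T^{k+1}x \to 0$, as required.

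There is no real obstacle here beyond unpacking the definition of averagedness; the whole argument is a one-line Fejér-monotonicity-plus-telescoping computation. The only care needed is to record explicitly that the averagedness constant $\alpha$ can be taken strictly less than $1$, so that the coefficient $(1-\alpha)/\alpha$ is strictly positive and the telescoping sum controls the increments. Alternatively, one could simply cite \cite[Proposition~5.15(iii)]{BC2017} (or the analogous statement there), but the self-contained argument above is short enough to include if desired.
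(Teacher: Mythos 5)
Your argument is correct and complete: the descent inequality you derive is the standard one for $\alpha$-averaged operators (cf.\ \cite[Proposition~4.35]{BC2017}), and the telescoping/Fej\'er argument is precisely the proof underlying the references the paper cites for this fact (\cite{BruRei}, \cite[Corollary~5.16(ii)]{BC2017}). The paper itself only cites; your self-contained version takes the same route and is accurate, including the necessary remark that $\alpha<1$ makes the coefficient $(1-\alpha)/\alpha$ strictly positive.
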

\begin{proof}
See Bruck and Reich's paper \cite{BruRei} or \cite[Corollary~5.16(ii)]{BC2017}.
\end{proof}

\begin{corollary}
\label{c:key}
Let $L\colon \Hc\to\Hc$ be linear and averaged nonexpansive.
Then 
\begin{equation}
(\forall x\in\Hc)\quad L^kx \to P_{\Fix L}(x).
\end{equation}
\end{corollary}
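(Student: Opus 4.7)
The plan is to combine the two facts that immediately precede the corollary. Since $L$ is assumed to be linear, we automatically have $L(0)=0$, so $0\in\Fix L$ and hence $\Fix L\neq\varnothing$. This is the only extra observation needed to invoke \cref{f:averasymp}, whose hypothesis is nonemptiness of the fixed point set.

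First I would apply \cref{f:averasymp} to the averaged nonexpansive operator $L$: for every $x\in\Hc$, the sequence of successive differences satisfies $L^kx-L^{k+1}x\to 0$. Next, since $L$ is also linear and (averaged, hence) nonexpansive, I invoke the equivalence in \cref{f:convasymp}, whose right-hand side is exactly the conclusion from the previous step. The left-hand side of that equivalence then yields $L^kx\to P_{\Fix L}(x)$, as desired.

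There is no real obstacle here; the corollary is essentially the logical conjunction of the two preceding facts, with the trivial linear remark $L(0)=0$ used to ensure $\Fix L\neq\varnothing$ so that \cref{f:averasymp} applies. The proof can be written in two or three lines.
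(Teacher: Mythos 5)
Your proof is correct and is exactly the paper's argument: note $0\in\Fix L$ by linearity, apply \cref{f:averasymp} to get asymptotic regularity, and then conclude via the equivalence in \cref{f:convasymp}. Nothing to add.
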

\begin{proof}
Because $0\in\Fix L$, we have $\Fix L\neq\varnothing$. 
Now combine
\cref{f:convasymp} with \cref{f:averasymp}.
\end{proof}

\begin{fact}
\label{f:BLM}
Let $L$ be a linear nonexpansive operator and let $b\in X$.
Set $T\colon X\to X\colon x\to Lx+b$ and suppose that 
$\Fix T\neq\varnothing$.
Then $b\in\ran(\Id-L)$, and for every $x\in X$ and
$a\in(\Id-L)^{-1}b$, the following hold:
\begin{enumerate}
\item $b=a-La\in\ran(\Id-L)$.
\item $\Fix T = a + \Fix L$.
\item 
\label{f:BLMiii}
$P_{\Fix T}(x) = P_{\Fix L}(x) + P_{(\Fix L)^\perp}(a)$.
\item $T^kx = L^k(x-a)+a$.
\item $L^kx \to P_{\Fix L}x$ $\Leftrightarrow$ $T^kx \to P_{\Fix T}x$. 
\end{enumerate}
\end{fact}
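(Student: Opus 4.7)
The plan is to prove the five parts in sequence, using each as a building block for the next. The overarching strategy reduces every statement about the affine operator $T$ to the corresponding statement about the linear operator $L$ via the shift by $a$. Part (i) is immediate: the hypothesis $a \in (\Id-L)^{-1}b$ unpacks to $a - La = b$, which is the entire content of (i). Part (ii) follows by rewriting $x \in \Fix T \iff x - Lx = b$; using (i), this becomes $(x-a) - L(x-a) = 0$, i.e., $x - a \in \Fix L$. Part (iv) is then a short induction on $k$ whose only ingredient is the identity $La + b = a$ (from (i)): indeed $T(L^k(x-a) + a) = L^{k+1}(x-a) + La + b = L^{k+1}(x-a) + a$.

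For part (iii), the plan is to exploit the affine-translate structure exposed in (ii). Since $L$ is continuous and linear, $\Fix L = (\Id - L)^{-1}(0)$ is a closed linear subspace, so by (ii), $\Fix T = a + \Fix L$ is a closed affine subspace. Invoking the standard projection identity for translates of a closed linear subspace, $P_{a+M}(x) = a + P_M(x-a)$, together with the linearity of $P_{\Fix L}$, yields
\[ P_{\Fix T}(x) = a + P_{\Fix L}(x) - P_{\Fix L}(a) = P_{\Fix L}(x) + \bigl(a - P_{\Fix L}(a)\bigr). \]
Applying \cref{ex:perp} to $\Fix L$ rewrites the last summand as $P_{(\Fix L)^\perp}(a)$, giving the claimed formula.

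Finally, part (v) drops out by combining (iii) and (iv): a direct calculation produces the identity
\[ T^k x - P_{\Fix T}(x) = L^k(x-a) - P_{\Fix L}(x-a), \]
valid for every $x \in X$ and every iterate. The equivalence of the two convergence statements then follows from this identity via the mutual substitution $y = x - a$ and $x = y + a$. The only step requiring any real care is (iii), where one must both justify that $\Fix L$ is a closed linear subspace (so that \cref{ex:perp} may be invoked) and keep the bookkeeping straight between the linear projection and the affine translation; the remaining parts are mechanical consequences of the single identity $La + b = a$.
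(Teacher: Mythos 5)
Your proof is correct in substance, and it is worth noting that the paper itself offers no argument for this Fact: it simply cites \cite{BLM}. Your self-contained derivation is the standard one and, as far as the structure goes, surely the one carried out in that reference: every item is an instance of the conjugation $T=\tau_a\circ L\circ\tau_a^{-1}$, where $\tau_a$ is translation by $a$, and the single computation $La+b=a$. Parts (i), (ii) and (iv) are fine as written, and your treatment of (iii) correctly supplies the two facts that need checking, namely that $\Fix L=\ker(\Id-L)$ is a closed linear subspace (so that \cref{ex:perp} applies) and that $P_{\Fix L}$ is linear. One small omission: before speaking of ``$a\in(\Id-L)^{-1}b$'' you should record why this set is nonempty --- any $x_0\in\Fix T$ satisfies $(\Id-L)x_0=b$, which is the assertion $b\in\ran(\Id-L)$ preceding the enumeration; your item (i) only unpacks the membership once an $a$ has been handed to you.

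The one point that needs real care is item (v). It must be read with both sides quantified over all of $X$, i.e.\ as the equivalence of the pointwise operator convergences $L^k\to P_{\Fix L}$ and $T^k\to P_{\Fix T}$; at a single fixed $x$ the equivalence is false. Indeed, take $X=\RR^2$, let $L$ be rotation by an angle that is an irrational multiple of $\pi$, and let $b\neq 0$: then $\Fix L=\{0\}$, $a=(\Id-L)^{-1}b\neq 0$, $\Fix T=\{a\}\neq\varnothing$, and for $x=0$ one has $L^k0\to 0=P_{\Fix L}0$ while $T^k0=a-L^ka$ fails to converge. Your displayed identity
\[
T^kx-P_{\Fix T}(x)=L^k(x-a)-P_{\Fix L}(x-a)
\]
relates $T^k$ at $x$ to $L^k$ at the \emph{shifted} point $x-a$, so the ``mutual substitution'' you invoke proves exactly the universally quantified version --- which is the version the paper actually uses in its affine subsection --- and cannot prove the fixed-$x$ reading that the phrasing ``for every $x\in X$ \ldots\ the following hold'' might suggest. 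Make the quantification explicit and the argument is complete.
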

\begin{proof}
See \cite[Lemma~3.2 and Theorem~3.3]{BLM}. 
\end{proof}

\begin{remark}
\label{r:BLM}
Consider \cref{f:BLM} and its notation.
If $a\in(\Id-L)^{-1}b$ then 
$P_{(\Fix L)^\perp}$ is likewise because 
$b = (\Id-L)a = (\Id-L)(P_{\Fix L}(a)+P_{(\Fix L)^\perp}(a))
=P_{(\Fix L)^\perp}(a)$; moreover,
using \cite[Lemma~3.2.1]{Groetsch}, we see that 
\begin{equation}
(\Id-L)^\dagger b 
=(\Id-L)^\dagger(\Id-L)a
=P_{(\ker(\Id-L))^\perp}(a)
=P_{(\Fix L)^\perp}(a),
\end{equation}
where again ``$^\dagger$'' denotes the Moore-Penrose inverse of a continuous
linear operator (with possibly nonclosed range).
So given $b\in X$, we may concretely set 
\begin{equation}
a = (\Id-L)^\dagger b \in (\Id-L)^{-1}b;
\end{equation}
with this choice, \cref{f:BLMiii} turns into the even more pleasing identity
\begin{equation}
P_{\Fix T}(x) = P_{\Fix L}(x) + a.
\end{equation}
\end{remark}

\section{Known results on Ryu and on Malitsky-Tam splitting}

In this section, we present the precise form of
Ryu's and the Malitsky-Tam algorithms and review 
known convergence results. 

\label{sec:known}

\subsection{Ryu splitting}

We start with Ryu's algorithm. In this subsection, 
\begin{equation}
\text{$A,B,C$ are maximally monotone operators on 
$X$, 
}
\end{equation}
with resolvents $J_A,J_B,J_C$, respectively.

The problem of interest is to 
\begin{equation}
\label{e:Ryuprob}
\text{find $x\in X$ such that $0\in (A+B+C)x$,}
\end{equation}
and we assume that \cref{e:Ryuprob} has a solution. 
The algorithm pioneered by Ryu  \cite{Ryu} provides a method
for finding a solution to \cref{e:Ryuprob}.
It proceeds as follows. Set\footnote{We will express vectors in
product spaces both as column and as row vectors depending 
on which version is more readable.}
\begin{equation}
\label{e:genM}
M\colon X\times X\to X\times X\times X\colon
\begin{pmatrix}
x\\
y
\end{pmatrix}
\mapsto
\begin{pmatrix}
J_A(x)\\[+1mm]
J_B(J_A(x)+y)\\[+1mm]
J_C\big(J_A(x)-x+J_B(J_A(x)+y)-y\big) 
\end{pmatrix}. 
\end{equation}
Next, denote by $Q_1\colon X\times X\times X\to X\colon 
(x_1,x_2,x_3)\mapsto x_1$
and similarly for $Q_2$ and $Q_3$. 
We also set $\Delta := \menge{(x,x,x)\in X^3}{x\in X}$. 
We are now ready to introduce the \emph{Ryu operator}
\begin{equation}
\label{e:TRyu}
T := \tryu \colon X^2\to X^2\colon 
z\mapsto 
z + \big((Q_3-Q_1)Mz,(Q_3-Q_2)Mz\big). 
\end{equation}
Given a starting point $(x_0,y_0)\in X\times X$, 
the basic form of 
Ryu splitting generates a governing sequence via
\begin{equation}
\label{e:basicRyu}
(\forall \kkk)\quad (x_{k+1},y_{k+1}) := 
(1-\lambda)(x_k,y_k) + \lambda T(x_k,y_k).
\end{equation}

The following result records the basic convergence properties 
by Ryu \cite{Ryu}, 
and recently improved by Arag\'on-Artacho, Campoy, and Tam \cite{AACT20}. 

\begin{fact}[\bf Ryu and also Aragon-Artacho-Campoy-Tam]
\label{f:Ryu}
The operator
$\tryu$ is nonexpansive with 
\begin{equation}
\label{e:fixtryu}
\Fix \tryu = 
\menge{(x,y)\in X\times X}{J_A(x)=J_B(J_A(x)+y) = J_C(R_A(x)-y)}
\end{equation}
and 
\begin{equation}
\zer(A+B+C) = J_A\big(Q_1\Fix\tryu\big). 
\end{equation}
Suppose that $0<\lambda<1$ and 
consider the sequence generated by 
\cref{e:basicRyu}. 
Then there exists $(\bar{x},\bar{y})\in X\times X$ such that 
\begin{equation}
(x_k,y_k)\weakly (\bar{x},\bar{y}) \in \Fix\tryu, 
\end{equation}
\begin{equation}
M(x_k,y_k) \weakly M(\bar{x},\bar{y})\in\Delta, 
\end{equation}
and 
\begin{equation}
\label{e:210815d}
\big((Q_3-Q_1)M(x_k,y_k),(Q_3-Q_2)M(x_k,y_k)\big)\to (0,0). 
\end{equation}
In particular,
\begin{equation}
J_A(x_k) \weakly J_A\bar{x}\in \zer(A+B+C).
\end{equation}
\end{fact}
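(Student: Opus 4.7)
The proof naturally decomposes into four parts: (i) averagedness of $\tryu$, (ii) the fixed-point formula \cref{e:fixtryu}, (iii) the correspondence $\zer(A+B+C) = J_A(Q_1\Fix\tryu)$, and (iv) the asymptotic analysis of the Krasnosel'ski\u{\i}--Mann iteration \cref{e:basicRyu}.

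For (i), the plan is to combine the three firm-nonexpansiveness inequalities for the resolvents at carefully chosen arguments. Fix $z=(x,y)$ and $z'=(x',y')$ and write $Mz=(p,q,r)$, $Mz'=(p',q',r')$. Apply firm nonexpansiveness of $J_A$ at $(x,x')$, of $J_B$ at $(p+y,\,p'+y')$, and of $J_C$ at $(p-x+q-y,\,p'-x'+q'-y')$. A direct computation shows $\tryu z-\tryu z' = (x-x'+(r-r')-(p-p'),\,y-y'+(r-r')-(q-q'))$, so summing the three inequalities and rearranging inner products should give an averagedness estimate of the form $\|\tryu z-\tryu z'\|^2 + \alpha\|(\Id-\tryu)z-(\Id-\tryu)z'\|^2 \leq \|z-z'\|^2$ with a positive constant $\alpha$. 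This is Ryu's original computation and I do not expect difficulty beyond bookkeeping.

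For (ii), a point $z=(x,y)$ is fixed iff $(Q_3-Q_1)Mz = 0 = (Q_3-Q_2)Mz$, i.e., iff the three coordinates of $Mz$ coincide. The equality $p=q$ reads $J_A(x)=J_B(J_A(x)+y)$; substituting this into the third coordinate collapses it to $J_C(R_A(x)-y)$, yielding \cref{e:fixtryu}. For (iii), given $z\in\Fix\tryu$, set $p:=J_A(x)$; the three resolvent identities translate to $x-p\in Ap$, $y\in Bp$, and $p-x-y\in Cp$, whose sum is $0\in(A+B+C)p$. Conversely, for any $p\in\zer(A+B+C)$ pick $a\in Ap$, $b\in Bp$, $c\in Cp$ with $a+b+c=0$; setting $x:=p+a$ and $y:=b$ produces a fixed point with $J_A(x)=p$, via $R_A(x)-y = p+c$.

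For (iv), since $\tryu$ is averaged with $\Fix\tryu\neq\emp$ (by (iii) and the standing assumption that \cref{e:Ryuprob} is solvable), the relaxed map $(1-\lambda)\Id+\lambda\tryu$ with $\lambda\in(0,1)$ is averaged too, so standard Krasnosel'ski\u{\i}--Mann theory (\cref{f:averasymp} and the Opial-style argument in \cite[Chapter~5]{BC2017}) gives Fej\'er monotonicity of $(x_k,y_k)$ with respect to $\Fix\tryu$, $\tryu(x_k,y_k)-(x_k,y_k)\to 0$, and weak convergence to some $(\bar x,\bar y)\in\Fix\tryu$ via demiclosedness of $\Id-\tryu$ at $0$. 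Reading off the two components of $\tryu-\Id$ yields \cref{e:210815d} immediately. The step I expect to require the most care is the weak convergence $M(x_k,y_k)\weakly M(\bar x,\bar y)\in\Delta$, since resolvents are not weakly sequentially continuous in general. The strategy is: firm nonexpansiveness of $J_A,J_B,J_C$ together with boundedness of $(x_k,y_k)$ gives boundedness of $M(x_k,y_k)$; \cref{e:210815d} forces every weak cluster point of $M(x_k,y_k)$ to lie on $\Delta$; and demiclosedness of each of $\Id-J_A$, $\Id-J_B$, $\Id-J_C$ at $0$, combined with the already-established weak limit $(\bar x,\bar y)$ of $(x_k,y_k)$, pins down the unique cluster point as $M(\bar x,\bar y)$. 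The concluding assertion $J_A(x_k)\weakly J_A(\bar x)\in\zer(A+B+C)$ is then the first-coordinate projection of the preceding line, combined with (iii).
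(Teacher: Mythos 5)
The paper offers no proof of this Fact: it is quoted verbatim from \cite{Ryu} and \cite{AACT20}, so your blind reconstruction is really being measured against those sources. Your four-part architecture is the right one, and parts (ii) and (iii) are correct as written (the resolvent translations $x-p\in Ap$, $y\in Bp$, $p-x-y\in Cp$ and the converse construction $x=p+a$, $y=b$ are exactly the standard argument), as is the governing-sequence half of (iv): $T_\lambda=(1-\lambda)\Id+\lambda\tryu$ is $\lambda$-averaged once $\tryu$ is nonexpansive, so Krasnosel'ski\u{\i}--Mann theory plus demiclosedness of $\Id-T_\lambda$ gives $(x_k,y_k)\weakly(\bar x,\bar y)\in\Fix\tryu$ and \cref{e:210815d}.

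Two steps would fail as described. First, in (i), summing the three firm-nonexpansiveness inequalities yields $\|\tryu z-\tryu z'\|^2\le\|z-z'\|^2-\|(Q_1-Q_2)Mz-(Q_1-Q_2)Mz'\|^2$, i.e.\ nonexpansiveness with a defect in the \emph{difference of the first two coordinates of $M$}, not the averagedness estimate $\|\tryu z-\tryu z'\|^2+\alpha\|(\Id-\tryu)z-(\Id-\tryu)z'\|^2\le\|z-z'\|^2$ you announce. No such $\alpha>0$ exists: with $A=B=0$ and $C=N_{\{0\}}$ one gets $\tryu(x,y)=(0,-x)$, and $\tfrac{1}{\alpha}\bigl(\tryu-(1-\alpha)\Id\bigr)$ sends $(1,0)$ to a vector of norm $\sqrt{(1-\alpha)^2+1}/\alpha>1$ for every $\alpha\in\left]0,1\right[$, so $\tryu$ is nonexpansive but not averaged --- which is precisely why the Fact insists on $0<\lambda<1$. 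This is repairable (nonexpansiveness is all that (iv) actually needs), but the claim as stated is false. Second, and more seriously, your tool for identifying the weak limit of the shadow sequence is the wrong one: demiclosedness of $\Id-J_A$ at $0$ requires $x_k-J_A(x_k)\to0$ \emph{strongly}, and this is not available --- \cref{e:210815d} only controls the differences $(Q_i-Q_j)M(x_k,y_k)$, not $x_k-J_A(x_k)$. The correct argument writes $x_k-p_k\in Ap_k$, $p_k+y_k-q_k\in Bq_k$, $p_k-x_k+q_k-y_k-r_k\in Cr_k$, observes that these three elements sum to $p_k-r_k\to0$ while $p_k-q_k\to0$ and $q_k-r_k\to0$, and then invokes the weak$\times$strong closedness of $\gra(A\times B\times C)$ together with a limiting inner-product estimate (the Svaiter-type argument behind the Douglas--Rachford shadow-sequence theorem, cf.\ the corresponding machinery in \cite{BC2017}) to show every weak cluster point of $(p_k,q_k,r_k)$ is $\bigl(J_A\bar x,J_A\bar x,J_A\bar x\bigr)$. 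That is the genuinely delicate step in \cite{Ryu} and \cite{AACT20}, and your sketch does not close it.
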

\begin{proof}
See \cite{Ryu} and \cite{AACT20}. 
\end{proof}

\subsection{Malitsky-Tam splitting}

We now turn to the Malitsky-Tam algorithm. 
In this subsection, let $n\in\{3,4,\ldots\}$ and 
let $A_1,A_2,\ldots,A_n$ 
be maximally monotone operators on  $X$. 
The problem of interest is to 
\begin{equation}
\label{e:MTprob}
\text{find $x\in X$ such that $0\in (A_1+A_2+\cdots + A_n)x$,}
\end{equation}
and we assume that \cref{e:MTprob} has a solution. 
The algorithm proposed by Malitsky and Tam \cite{MT} provides a method
for finding a solution to \cref{e:MTprob}.
Now set\footnote{Again, we will express vectors in
product spaces both as column and as row vectors depending 
on which version is more readable.}
\begin{subequations}
\label{e:MTM}
\begin{align}
M\colon X^{n-1}
&\to 
X^n\colon
\begin{pmatrix}
z_1\\
\vdots\\
z_{n-1}
\end{pmatrix}
\mapsto
\begin{pmatrix}
x_1\\
\vdots\\
x_{n-1}\\
x_n
\end{pmatrix},
\quad\text{where}\;\;
\\[+2mm]
&(\forall i\in\{1,\ldots,n\})
\;\;
x_i = \begin{cases}
J_{A_1}(z_1), &\text{if $i=1$;}\\
J_{A_i}(x_{i-1}+z_i-z_{i-1}), &\text{if $2\leq i\leq n-1$;}\\
J_{A_n}(x_1+x_{n-1}-z_{n -1}), &\text{if $i=n$.}
\end{cases}
\end{align}
\end{subequations}
As before, we denote by $Q_1\colon X^n \to X\colon 
(x_1,\ldots,x_{n-1},x_n)\mapsto x_1$
and similarly for $Q_2,\ldots Q_n$. 
We also set $\Delta := \menge{(x,\ldots,x)\in X^n}{x\in X}$, 
the diagonal in $X^n$. 
We are now ready to introduce the \emph{Malitsky-Tam (MT) operator}
\begin{equation}
\label{e:TMT}
T := \tmt \colon X^{n-1}\to X^{n-1}\colon 
\bz\mapsto 
\bz+ 
\begin{pmatrix}
(Q_2-Q_1)M\bz\\
(Q_3-Q_2)M\bz\\
\vdots\\
(Q_n-Q_{n-1})M\bz
\end{pmatrix}. 
\end{equation}
Given a starting point $\bz_0 \in X^{n-1}$, 
the basic form of 
MT splitting generates a governing sequence via
\begin{equation}
\label{e:basicMT}
(\forall \kkk)\quad \bz_{k+1} := 
(1-\lambda)\bz_{k} + \lambda T\bz_k. 
\end{equation}

The following result records the basic convergence. 

\begin{fact}[\bf Malitsky-Tam]
\label{f:MT}
The operator
$\tmt$ is nonexpansive with 
\begin{equation}
\label{e:fixtmt}
\Fix \tmt = 
\menge{z\in X^{n-1}}{Mz \in \Delta},
\end{equation}
\begin{equation}
\zer(A_1+\cdots+A_n) = J_{A_1}\big(Q_1\Fix\tmt\big).
\end{equation}
Suppose that $0<\lambda<1$ and 
consider the sequence generated by 
\cref{e:basicMT}. 
Then there exists $\bar{\bz}\in X^{n-1}$ such that 
\begin{equation}
\bz_k\weakly \bar{\bz} \in \Fix\tmt, 
\end{equation}
\begin{equation}
M\bz_{k} \weakly M\bar{\bz}\in\Delta,
\end{equation}
and 
\begin{equation}
\label{e:210817a}
(\forall (i,j)\in\{1,\cdots,n\}^2)\quad 
(Q_i-Q_j)M\bz_k\to 0. 
\end{equation}
In particular,
\begin{equation}
J_{A_1}Q_1 M\bz_k \weakly J_AQ_1 M\bar{\bz}\in \zer(A_1+\ldots+A_n). 
\end{equation}
\end{fact}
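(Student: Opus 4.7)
The plan is to address the four assertions of the statement in order, following the strategy originally carried out by Malitsky and Tam \cite{MT}. First, for the nonexpansiveness of $\tmt$, I would write each coordinate of $T\bz$ in terms of the firmly nonexpansive resolvents $J_{A_i}$ and expand $\|T\bz - T\bz'\|^2$. Concretely, setting $\bx = M\bz$ and $\bx' = M\bz'$, one can show via the firm nonexpansiveness inequality $\|J_{A_i}u - J_{A_i}u'\|^2 + \|(\Id - J_{A_i})u - (\Id - J_{A_i})u'\|^2 \leq \|u - u'\|^2$ that a suitable telescoping identity yields $\|T\bz - T\bz'\|^2 + \text{(nonnegative residuals)} \leq \|\bz - \bz'\|^2$. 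The residual involves terms of the form $\|(Q_{i+1} - Q_i)M\bz - (Q_{i+1} - Q_i)M\bz'\|^2$, which establishes not merely nonexpansiveness but an averaged-type inequality suitable for the Krasnoselskii--Mann framework.

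Next, for the fixed-point formula \cref{e:fixtmt}, note that $\bz \in \Fix \tmt$ iff every coordinate of the correction term in \cref{e:TMT} vanishes, i.e., $(Q_{i+1} - Q_i)M\bz = 0$ for $i = 1,\ldots,n-1$, which is precisely $M\bz \in \Delta$. For the identification $\zer(A_1 + \cdots + A_n) = J_{A_1}(Q_1 \Fix \tmt)$, I would start from a fixed point $\bz$ at which $M\bz = (x,x,\ldots,x)$ with $x = J_{A_1}(z_1)$, and use the resolvent characterization $J_{A_i}u = v \Leftrightarrow u - v \in A_i v$ to extract elements $a_i \in A_i x$: specifically $a_1 = z_1 - x$, $a_i = z_i - z_{i-1}$ for $2 \leq i \leq n-1$, and $a_n = x - z_{n-1}$. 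Telescoping gives $\sum_i a_i = 0$, so $0 \in (A_1 + \cdots + A_n)x$. The reverse inclusion amounts to constructing a $\bz$ realizing a given zero via the same telescoping identity.

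For the weak convergence statements, the averaged inequality from step one combined with $\Fix \tmt \neq \varnothing$ (which follows from the assumption that \cref{e:MTprob} has a solution and the just-established surjection from fixed points to zeros) puts us in the setting of Krasnoselskii--Mann iterations with $\lambda \in (0,1)$. Standard results (e.g., \cite[Theorem~5.15]{BC2017}) yield weak convergence $\bz_k \weakly \bar{\bz} \in \Fix \tmt$ together with $\|T\bz_k - \bz_k\| \to 0$. Because the residual inequality controls $\sum_i \|(Q_{i+1} - Q_i)M\bz_k\|^2$ by a multiple of $\|T\bz_k - \bz_k\|^2$ (uniformly in $k$), \cref{e:210817a} follows. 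Then $M$ is continuous and affine-in-structure, so weak continuity yields $M\bz_k \weakly M\bar{\bz}$, and the limit lies in $\Delta$ because the off-diagonal differences all tend to zero. The final line about $J_{A_1}Q_1 M \bz_k$ follows from weak continuity of $J_{A_1}$ (it is firmly nonexpansive) composed with $Q_1 M$, and membership in $\zer(A_1+\cdots+A_n)$ from the previous characterization.

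The main obstacle is the first step: getting the right averaged inequality with sharp enough residual terms to imply both nonexpansiveness of $\tmt$ and the strong-type convergence $(Q_i - Q_j)M\bz_k \to 0$. This requires carefully unrolling the chained structure of \cref{e:MTM} and telescoping through all $n$ resolvent inequalities in the correct order. Once the inequality is in hand, everything else reduces to standard facts about Krasnoselskii--Mann iterations and resolvent identities.
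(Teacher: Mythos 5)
First, note that the paper does not prove this Fact at all: its ``proof'' is the single citation ``See \cite{MT}.'' You are therefore reconstructing the argument of Malitsky and Tam, and your outline of the first three steps matches theirs: the telescoped firm-nonexpansiveness inequalities giving nonexpansiveness of $\tmt$ with residual terms; the observation that $\bz\in\Fix\tmt$ iff every coordinate of the update in \cref{e:TMT} vanishes, i.e.\ $M\bz\in\Delta$; and the extraction of $a_i\in A_i x$ with $\sum_i a_i=0$ by telescoping the resolvent characterizations (your formulas $a_1=z_1-x$, $a_i=z_i-z_{i-1}$ for $2\le i\le n-1$, $a_n=x-z_{n-1}$ are correct, as is the converse construction). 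The Krasnoselskii--Mann step giving $\bz_k\weakly\bar{\bz}\in\Fix\tmt$ and $\|T\bz_k-\bz_k\|\to0$ is also fine; in fact $\|T\bz_k-\bz_k\|^2=\sum_{i=1}^{n-1}\|(Q_{i+1}-Q_i)M\bz_k\|^2$ exactly, so \cref{e:210817a} follows by telescoping without invoking the residual inequality at all.

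The genuine gap is the last step. You justify $M\bz_k\weakly M\bar{\bz}$ and $J_{A_1}Q_1M\bz_k\weakly J_{A_1}Q_1M\bar{\bz}$ by ``weak continuity'' of $M$ and of $J_{A_1}$, asserting that firm nonexpansiveness implies weak sequential continuity. It does not: in infinite dimensions even metric projections onto closed convex sets fail to be weakly sequentially continuous --- for the projection $P_B$ onto the closed unit ball of $\ell^2$ one has $e_1+e_k\weakly e_1$ while $P_B(e_1+e_k)=(e_1+e_k)/\sqrt{2}\weakly e_1/\sqrt{2}\neq P_B(e_1)$. The correct route, which is what \cite{MT} actually follows, is a demiclosedness argument: use boundedness of $(M\bz_k)_\kkk$, the strong convergence $(Q_i-Q_j)M\bz_k\to 0$, and the sequential weak--strong closedness of the graphs of the maximally monotone operators $A_i$ to show that every weak cluster point of the shadow sequence equals $J_{A_1}(\bar{z}_1)$, and then a Fej\'er-monotonicity/Opial argument to obtain uniqueness of that cluster point. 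Without this replacement your final two displayed convergences are unproved. (In the special setting the present paper actually exploits --- $A_i=N_{U_i}$ with $U_i$ a closed subspace --- the resolvents are bounded linear and hence genuinely weakly continuous, so your shortcut would be valid there; but the Fact is stated for general maximally monotone operators.)
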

\begin{proof}
See \cite{MT}. 
\end{proof}

\section{Main Results}

We are now ready to tackle our main results.
We shall find useful descriptions of the fixed point sets
of the Ryu and the Malitsky-Tam operators.
These description will allow us to deduce strong convergence of the 
iterates to the projection
onto the intersection. 

\label{sec:main}

\subsection{Ryu splitting}

In this subsection, we assume that 
\begin{equation}
\text{
$U,V,W$ are closed linear subspaces of $X$.
}
\end{equation}
We set
\begin{equation}
A := N_U,
\;\;
B := N_V,
\;\;
C := N_{W}. 
\end{equation}
Then
\begin{equation}
Z := \zer(A+B+C) = U\cap V\cap W.
\end{equation}
Using linearity of the projection operators, the operator $M$ defined in \cref{e:genM} turns into
\begin{equation}
\label{e:linM}
M\colon X\times X\to X\times X\times X\colon
\begin{pmatrix}
x\\
y
\end{pmatrix}
\mapsto
\begin{pmatrix}
P_Ux\\[+1mm]
P_VP_Ux+P_Vy\\[+1mm]
P_WP_Ux + \textcolor{black}{P_W}P_VP_Ux-P_Wx \textcolor{black}{+}P_WP_Vy-P_Wy
\end{pmatrix},
\end{equation}
while the Ryu operator is still (see \cref{e:TRyu})
\begin{equation}
\label{e:linT}
T := \tryu \colon X^2\to X^2\colon 
z\mapsto 
z + \big((Q_3-Q_1)Mz,(Q_3-Q_2)Mz\big). 
\end{equation}

We now determine the fixed point set of the Ryu operator.

\begin{lemma}
\label{firelemma}
Let $(x,y)\in X\times X$. 
Then 
\begin{equation}
\label{e:210815b}
\Fix T = \big(Z\times\{0\}\big) \oplus 
\Big(\big(U^\perp\times V^\perp) \cap \big(\Delta^\perp+(\{0\}\times W^\perp) \big)\Big),
\end{equation}
where $\Delta = \menge{(x,x)\in X\times X}{x\in X}$. 
Consequently, setting 
\begin{equation}
E=\big(U^\perp\times V^\perp) \cap \big(\Delta^\perp+(\{0\}\times W^\perp) \big),
\end{equation}
we have 
\begin{equation}
\label{e:210815c}
P_{\Fix T}(x,y) = (P_{Z}x,0)\oplus P_E(x,y) \in (P_Zx\oplus U^\perp)\times V^\perp. 
\end{equation}
\end{lemma}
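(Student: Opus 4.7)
The approach is to start from the characterization of $\Fix\tryu$ recorded in \cref{f:Ryu} and specialize it, using the fact that $J_A=P_U$, $J_B=P_V$, $J_C=P_W$, and $R_A=2P_U-\Id$ are linear. In this setting \cref{e:fixtryu} reads as follows: $(x,y)\in\Fix T$ if and only if
\begin{equation}
P_Ux = P_V(P_Ux+y) = P_W(2P_Ux-x-y).
\end{equation}
The first equality forces $P_Ux$ to lie in $V$ (the range of $P_V$), and back-substituting yields $P_Vy=0$; hence $P_Ux\in U\cap V$ and $y\in V^\perp$.

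Writing $x=u+u^\perp$ with $u:=P_Ux$ and $u^\perp:=P_{U^\perp}x$, and recalling $y\in V^\perp$, the second equality simplifies to $u=P_W(u-u^\perp-y)$, or equivalently $u-P_Wu=-P_W(u^\perp+y)$. This is the main obstacle I anticipate: the left side lies in $W^\perp$ while the right lies in $W$, so both must vanish. Consequently $u\in W$ (so $u\in Z$) and $u^\perp+y\in W^\perp$. Setting $z:=u$ and $v^\perp:=y$, we obtain $(x,y)=(z,0)+(u^\perp,v^\perp)$ with $(z,0)\in Z\times\{0\}$ and, since $\Delta^\perp=\{(a,-a):a\in X\}$ implies $\Delta^\perp+\{0\}\times W^\perp=\{(a,b):a+b\in W^\perp\}$, with $(u^\perp,v^\perp)\in(U^\perp\times V^\perp)\cap(\Delta^\perp+\{0\}\times W^\perp)=E$. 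The converse inclusion is a direct check running the same computations backwards, which proves the set identity \cref{e:210815b}.

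To derive the projection formula, I would note that $Z\times\{0\}$ and $E$ are closed linear subspaces (indeed, $E$ is the intersection of the closed subspace $U^\perp\times V^\perp$ with the preimage $\{(a,b):a+b\in W^\perp\}$ of $W^\perp$ under the continuous map $(a,b)\mapsto a+b$) and that they are mutually orthogonal, since
\begin{equation}
\scal{(z,0)}{(u^\perp,v^\perp)}=\scal{z}{u^\perp}=0
\end{equation}
whenever $z\in Z\subseteq U$ and $u^\perp\in U^\perp$. Hence \cref{f:orthoP} yields $P_{\Fix T}=P_{Z\times\{0\}}\oplus P_E$, and clearly $P_{Z\times\{0\}}(x,y)=(P_Zx,0)$, giving \cref{e:210815c}. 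Finally, because $P_E(x,y)\in E\subseteq U^\perp\times V^\perp$, the first coordinate of $P_{\Fix T}(x,y)$ lies in $P_Zx+U^\perp$---an orthogonal sum since $P_Zx\in U$---and its second coordinate lies in $V^\perp$, which establishes the final inclusion.
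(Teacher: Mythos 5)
Your proposal is correct and follows essentially the same route as the paper: specialize the fixed-point characterization \cref{e:fixtryu} to the linear projections, peel off the conditions $P_Ux\in Z$, $y\in V^\perp$, $P_{U^\perp}x+y\in W^\perp$ by splitting each equation into its $V$/$V^\perp$ and $W$/$W^\perp$ components, and then apply \cref{f:orthoP} to the orthogonal decomposition $(Z\times\{0\})\oplus E$. Your identification of $\Delta^\perp+(\{0\}\times W^\perp)$ with the preimage $\menge{(a,b)}{a+b\in W^\perp}$ also takes care of the closedness of that sum, which the paper establishes separately before making the same identification.
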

\begin{proof}
Note that $(x,y)=(P_{W^\perp}y+(x-P_{W^\perp}y),P_{W^\perp}y+P_Wy)
=(P_{W^\perp}y,P_{W^\perp}y)+(x-P_{W^\perp}y,P_Wy)
\in \Delta + (X\times W)$.
Hence 
\begin{equation}
X\times X = \Delta+(X\times W)\;\;\text{is closed;}
\end{equation}
consequently, by, e.g., \cite[Corollary~15.35]{BC2017}, 
\begin{equation}
\label{e:210815a}
\Delta^\perp + (\{0\}\times W^\perp)\;\;\text{is closed.}
\end{equation}
Next, using \cref{e:fixtryu}, we have the equivalences 
\begin{subequations}
\begin{align}
&\hspace{-1cm}(x,y)\in\Fix\tryu\\
&\Leftrightarrow
P_Ux = P_V\big(P_Ux+y\big) = P_W\big(R_Ux-y\big)\\
&\Leftrightarrow
P_Ux\in Z 
\;\land\; 
y\in V^\perp
\;\land\; 
P_Ux = P_W\big(P_Ux-P_{U^\perp}x-y\big)\\
&\Leftrightarrow
x\in Z + U^\perp 
\;\land\; 
y\in V^\perp
\;\land\; 
P_{U^\perp}x + y \in W^\perp. 
\end{align}
\end{subequations}
Now define the linear operator 
\begin{equation}
S\colon X\times X\to X\colon (x,y)\mapsto x+y.
\end{equation}
Hence 
\begin{subequations}
\begin{align}
\Fix \tryu
&= 
\menge{(x,y)\in (Z+U^\perp)\times V^\perp}{P_{U^\perp}x+y\in W^\perp}\\
&=
\menge{(z+u^\perp,v^\perp)}{z\in Z,\,u^\perp\in U^\perp,\,v^\perp\in V^\perp, 
\,u^\perp + v^\perp\in W^\perp}\\
&=(Z\times\{0\})
\oplus \big((U^\perp\times V^\perp) \cap S^{-1}(W^\perp)\big). 
\end{align}
\end{subequations}
On the other hand,
$S^{-1}(W^\perp) = (\{0\}\times W^\perp)+\ker S
= (\{0\}\times W^\perp)+\Delta^\perp$ is closed by \cref{e:210815a}.
Altogether,
\begin{equation}
\Fix \tryu
= (Z\times\{0\})
\oplus \big((U^\perp\times V^\perp) \cap 
((\{0\}\times W^\perp)+\Delta^\perp)\big), 
\end{equation}
i.e., 
\cref{e:210815b} holds. 
Finally, \cref{e:210815c} follows from \cref{f:orthoP}.
\end{proof}

We are now ready for the main convergence result on Ryu's algorithm.

\begin{theorem}[\bf main result on Ryu splitting]
Given $0<\lambda<1$ and $(x_0,y_0)\in X\times X$, generated
the sequence $(x_k,y_k)_\kkk$ via\footnote{Recall \cref{e:linM} and \cref{e:linT}
for the definitions of $M$ and $T$.}
\begin{equation}
(\forall\kkk)\quad (x_{k+1},y_{k+1}) = (1-\lambda)(x_k,y_k)+\lambda T(x_k,y_k).
\end{equation}
Then 
\begin{equation}
\label{e:210815e}
M(x_k,y_k)\to \big(P_Z(x_0),P_Z(x_0),P_Z(x_0)\big);
\end{equation}
in particular,
\begin{equation}
P_U(x_k)\to P_Z(x_0).
\end{equation}
\end{theorem}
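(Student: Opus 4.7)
The plan is to realize the iteration as the orbit of a linear averaged nonexpansive operator, apply the abstract convergence theory from \cref{c:key}, and then identify the limit using the description of $\Fix T$ supplied by \cref{firelemma}.

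First I would observe that, in the present setting, the operator $M$ from \cref{e:linM} is linear (it is assembled from the linear projections $P_U$, $P_V$, $P_W$ and the identity), and hence so is the Ryu operator $T$ in \cref{e:linT}. By \cref{f:Ryu}, $T$ is nonexpansive, so the relaxed operator $L:=(1-\lambda)\Id+\lambda T$ is linear, $\lambda$-averaged, and satisfies $\Fix L=\Fix T$. The recursion in the theorem is precisely $(x_k,y_k)=L^k(x_0,y_0)$, and \cref{c:key} therefore yields the strong limit
\begin{equation*}
(x_k,y_k)\;\to\;P_{\Fix T}(x_0,y_0)\;=:\;(\bar x,\bar y).
\end{equation*}

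Next I would invoke \cref{firelemma}, specifically the decomposition in \cref{e:210815c}, which writes
\begin{equation*}
(\bar x,\bar y)=(P_Z x_0,0)\oplus P_E(x_0,y_0)\;\in\;(P_Z x_0\oplus U^\perp)\times V^\perp.
\end{equation*}
In particular $\bar x\in P_Z x_0+U^\perp$, so $P_U\bar x=P_Z x_0$, and $\bar y\in V^\perp$. Since $(\bar x,\bar y)\in\Fix T$, the characterization \cref{e:fixtryu} forces
\begin{equation*}
P_U\bar x \;=\; P_V(P_U\bar x+\bar y) \;=\; P_W(R_U\bar x-\bar y),
\end{equation*}
which are exactly the three coordinates of $M(\bar x,\bar y)$ according to \cref{e:linM}. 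Hence $M(\bar x,\bar y)=(P_Z x_0,P_Z x_0,P_Z x_0)\in\Delta$.

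Finally, $M$ being linear and continuous, I would apply $M$ to the convergent sequence $(x_k,y_k)\to(\bar x,\bar y)$ to conclude
\begin{equation*}
M(x_k,y_k)\;\to\;M(\bar x,\bar y)\;=\;\big(P_Z x_0,P_Z x_0,P_Z x_0\big),
\end{equation*}
which is \cref{e:210815e}; reading off the first coordinate recovers $P_U(x_k)\to P_Z(x_0)$. The main obstacle is really the identification $P_U\bar x=P_Z x_0$: a priori the point $\bar x$ carries a correction term coming from $P_E(x_0,y_0)$, and one must verify that this correction lives in $U^\perp$ so that it is killed by $P_U$. This is precisely what the direct-sum structure in \cref{firelemma} delivers, which is why that lemma does the heavy lifting in this argument.
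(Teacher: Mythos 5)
Your proof is correct and follows essentially the same route as the paper: realize the iterates as powers of the linear averaged operator $T_\lambda$, invoke \cref{c:key} to obtain strong convergence to $P_{\Fix T}(x_0,y_0)$, and use the decomposition \cref{e:210815c} from \cref{firelemma} to identify $P_U\bar x=P_Zx_0$. The only (harmless) difference is the final step: the paper deduces convergence of all three coordinates of $M(x_k,y_k)$ from the asymptotic statement \cref{e:210815d} in \cref{f:Ryu}, whereas you pass the strong limit through the continuous linear operator $M$ and use \cref{e:fixtryu} to see that $M(\bar x,\bar y)=(P_Zx_0,P_Zx_0,P_Zx_0)$ --- both arguments are valid here because the convergence is strong.
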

\begin{proof}
Set $T_\lambda := (1-\lambda)\Id+\lambda T$ and observe that 
$(x_k,y_k)_\kkk = (T^k_\lambda(x_0,y_0))_\kkk$. 
Hence, by \cref{c:key} and \cref{e:210815c}
\begin{subequations}
\begin{align}
(x_k,y_k)&\to P_{\Fix T_\lambda}(x_0,y_0)=P_{\Fix T}(x_0,y_0)\\
&=(P_Zx_0,0)+P_E(x_0,y_0)\in (P_Zx_0\oplus U^\perp)\times V^\perp,
\end{align}
\end{subequations}
where $E$ is as in \cref{firelemma}. 
Hence 
\begin{equation}
Q_1M(x_k,y_k) = P_Ux_k \to P_U(P_Zx_0) = P_Zx_0. 
\end{equation}
Now \cref{e:210815d} yields
\begin{equation}
\lim_{k\to\infty}Q_1M(x_k,y_k)=\lim_{k\to\infty}Q_2M(x_k,y_k)
= \lim_{k\to\infty}Q_3M(x_k,y_k)= P_Zx_0, 
\end{equation}
i.e., \cref{e:210815e} and we're done.
\end{proof}

\subsection{Malitsky-Tam splitting}

\label{ss:MTlin}

Let $n\in\{3,4,\ldots\}$. 
In this subsection, we assume that 
$U_1,\ldots,U_n$ are closed linear subspaces of $X$.
We set
\begin{equation}
(\forall i\in\{1,2,\ldots,n\})
\quad 
A_i := N_{U_i} \;\;\text{and}\;\;
P_i := P_{U_i}. 
\end{equation}
Then
\begin{equation}
Z := \zer(A_1+\cdots+A_n) = U_1\cap \cdots \cap U_n.
\end{equation}
The operator $M$ defined in \cref{e:MTM} turns into
\begin{subequations}
\label{e:linMTM}
\begin{align}
M\colon X^{n-1}
&\to 
X^n\colon
\begin{pmatrix}
z_1\\
\vdots\\
z_{n-1}
\end{pmatrix}
\mapsto
\begin{pmatrix}
x_1\\
\vdots\\
x_{n-1}\\
x_n
\end{pmatrix},
\quad\text{where}\;\;
\\[+2mm]
&(\forall i\in\{1,\ldots,n\})
\;\;
x_i = \begin{cases}
P_{1}(z_1), &\text{if $i=1$;}\\
P_{i}(x_{i-1}+z_i-z_{i-1}), &\text{if $2\leq i\leq n-1$;}\\
P_{n}(x_1+x_{n-1}-z_{n -1}), &\text{if $i=n$.}
\end{cases}
\end{align}
\end{subequations}
and the MT operator remains (see \cref{e:TMT}) 
\begin{equation}
\label{e:linMT}
T := \tmt \colon X^{n-1}\to X^{n-1}\colon 
\bz\mapsto 
\bz+ 
\begin{pmatrix}
(Q_2-Q_1)M\bz\\
(Q_3-Q_2)M\bz\\
\vdots\\
(Q_n-Q_{n-1})M\bz
\end{pmatrix}. 
\end{equation}

We now determine the fixed point set of the Malitsky-Tam operator. 

\begin{lemma}
\label{mtfixlemma}
The fixed point set of the MT operator $T=\tmt$ is 
\begin{align}
\label{e:210817e}
\Fix T &= \menge{(z,\ldots,z)\in X^{n-1}}{z\in Z} \oplus E,
\end{align}
where 
\begin{subequations}
\label{e:bloodyE}
\begin{align}
E &:= 
\ran\Psi \cap \big(X^{n-2}\times U_n^\perp)\\
&\subseteq 
U_1^\perp \times 
\cdots
%(U_1^\perp+U_2^\perp)
\times (U_1^\perp+\cdots+U_{n-2}^\perp)
\times \big((U_1^\perp+\cdots+U_{n-1}^\perp)\cap U_n^\perp\big)
\end{align}
\end{subequations}
and 
\begin{subequations}
\label{e:bloodyPsi}
\begin{align}
\Psi\colon U_1^\perp \times \cdots \times U_{n-1}^\perp
&\to X^{n-1}\\
(y_1,\ldots,y_{n-1})&\mapsto 
(y_1,y_1+y_2,\ldots,y_1+y_2+\cdots+y_{n-1})
\end{align}
\end{subequations}
 is the continuous linear partial sum operator which has closed range. 

Let $\bz = (z_1,\ldots,z_{n-1})\in X^{n-1}$,
and set $\bar{z} = (z_1+z_2+\cdots+z_{n-1})/(n-1)$.
Then
\begin{equation}
\label{e:210817d}
P_{\Fix T}\bz = (P_Z\bar{z},\ldots,P_Z\bar{z}) \oplus P_E\bz \in X^{n-1}
\end{equation}
and hence 
\begin{equation}
\label{e:210817g}
P_1(Q_1P_{\Fix T})\bz = P_Z\bar{z}.
\end{equation}

\end{lemma}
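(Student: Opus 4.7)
My plan is to unfold the fixed-point condition $M\bz\in\Delta$ into a concrete system of orthogonality relations on $\bz=(z_1,\ldots,z_{n-1})$, reparameterize via $\Psi$ to exhibit $\Fix T$ as an orthogonal direct sum, and then read off the projection from this decomposition using \cref{f:orthoP}.

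First, I would observe that $\bz\in\Fix T$ iff $x_1=x_2=\cdots=x_n$ in the recursion \cref{e:linMTM}; denote the common value by $x$. Using the elementary equivalence $u=P_Uv \Leftrightarrow u\in U \text{ and } v-u\in U^\perp$, the defining relations translate to: $x\in U_i$ for every $i$ (hence $x\in Z$), $z_1-x\in U_1^\perp$, $z_i-z_{i-1}\in U_i^\perp$ for $2\le i\le n-1$, and (from the exceptional definition of $x_n$) $z_{n-1}-x\in U_n^\perp$. I would then reparameterize by setting $y_1:=z_1-x$ and $y_i:=z_i-z_{i-1}$ for $2\le i\le n-1$. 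Then $y_i\in U_i^\perp$, $z_i=x+y_1+\cdots+y_i$, so
\begin{equation}
\bz=(x,\ldots,x)+\Psi(y_1,\ldots,y_{n-1}),
\end{equation}
with the final constraint $z_{n-1}-x\in U_n^\perp$ becoming $y_1+\cdots+y_{n-1}\in U_n^\perp$. This produces exactly the decomposition \cref{e:210817e}. The sum is orthogonal: for any $z\in Z$ and $(w_1,\ldots,w_{n-1})\in U_1^\perp\times\cdots\times U_{n-1}^\perp$, the $X^{n-1}$-inner product of $(z,\ldots,z)$ with $\Psi(w_1,\ldots,w_{n-1})$ equals $\sum_{j=1}^{n-1}(n-j)\langle z,w_j\rangle$, which vanishes termwise since $z\in U_j$ and $w_j\in U_j^\perp$. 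Closedness of $\ran\Psi$ would be handled by noting that $\Psi$ extends to a continuous linear bijection of $X^{n-1}$ whose inverse is the difference map $(a_1,\ldots,a_{n-1})\mapsto(a_1,a_2-a_1,\ldots,a_{n-1}-a_{n-2})$; it is therefore a linear homeomorphism of $X^{n-1}$, so the image of the closed set $U_1^\perp\times\cdots\times U_{n-1}^\perp$ is closed. The ambient inclusion in \cref{e:bloodyE} is immediate from the partial-sum shape of $\Psi$.

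For the projection formulas, set $D_Z:=\{(z,\ldots,z)\in X^{n-1}:z\in Z\}$. The orthogonal decomposition $\Fix T=D_Z\oplus E$ combined with \cref{f:orthoP} gives $P_{\Fix T}\bz=P_{D_Z}\bz+P_E\bz$. To identify $P_{D_Z}\bz$, I would minimize $\sum_{i=1}^{n-1}\|z_i-u\|^2$ over $u\in Z$; expansion and completion of the square shows this equals $(n-1)\|u-\bar z\|^2$ modulo an additive constant, whence the minimizer is $P_Z(\bar z)$, yielding \cref{e:210817d}. Equation \cref{e:210817g} then follows at once, since $Q_1P_E\bz\in U_1^\perp$ forces $P_1(Q_1P_E\bz)=0$, while $P_Z\bar z\in Z\subseteq U_1$ gives $P_1(P_Z\bar z)=P_Z\bar z$.

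The main obstacle is the asymmetric role of the last block in the recursion: $x_n$ is defined in terms of $x_1$ and $x_{n-1}$ rather than $x_{n-1}$ alone, so the closing orthogonality relation $z_{n-1}-x\in U_n^\perp$ has a different flavour from the telescoping conditions at the intermediate indices. Once this is correctly absorbed into the $\Psi$-reparameterization, everything else is standard orthogonal-decomposition bookkeeping.
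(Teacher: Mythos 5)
Your proof is correct and follows essentially the same route as the paper's: both unfold the fixed-point condition $M\bz\in\Delta$ into the telescoping relations $z_1-x\in U_1^\perp$, $z_i-z_{i-1}\in U_i^\perp$, $z_{n-1}-x\in U_n^\perp$, reparameterize via $\Psi$ to obtain $\Fix T$ as the orthogonal sum of the diagonal copy of $Z$ with $E$, and then read off the projection from \cref{f:orthoP}. Your minor variations --- using the biconditional characterization of $P_{U_i}$ to get both inclusions at once, identifying $P_{D_Z}\bz=(P_Z\bar{z},\ldots,P_Z\bar{z})$ by direct minimization rather than via the composition $P_{Z^{n-1}}P_\Delta$, and spelling out the orthogonality and closed-range verifications --- are all sound but do not change the substance of the argument.
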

\begin{proof}
Assume temporarily that $\bz\in\Fix T$ and 
set $\bx = M\bz = (x_1,\ldots,x_n)$. 
Then $\bar{x} := x_1=\cdots= x_n$ and so $\bar{x}\in Z$.
Now $P_1z_1=x_1=\bar{x}\in Z$ and thus 
\begin{equation}
z_1 \in \bar{x}+U_1^\perp \subseteq Z + U_1^\perp.
\end{equation}
Next, 
$\bar{x}=x_2 = P_2(x_1+z_2-z_1)=
P_2x_1 + P_2(z_2 - z_1)=
P_2\bar{x}+P_2(z_2-z_1) = \bar{x}$,
which implies $P_2(z_2-z_1)=0$ and so 
$z_2-z_1\in U_2^\perp$. 
It follows that 
\begin{equation}
z_2 \in z_1+U_2^\perp.
\end{equation}
Similarly, by considering $x_3,\ldots,x_{n-1}$, we obtain 
\begin{equation}
\label{e:210817b}
z_3\in z_2 + U_3^\perp, \ldots, z_{n-1}\in z_{n-2}+U_{n-1}^\perp.
\end{equation}
Finally, 
$\bar{x}=x_n=P_n(x_1+x_{n-1}-z_{n-1})=P_n(\bar{x}+\bar{x}-z_{n-1})
=2\bar{x}-P_nz_{n-1}$,
which implies $P_nz_{n-1}=\bar{x}$, i.e., 
$z_{n-1}\in \bar{x}+U_n^\perp$. 
Combining with \cref{e:210817b}, we see that 
$z_{n-1}$ satisfies
\begin{equation}
z_{n-1}\in (z_{n-2}+U_{n-1}^\perp)\cap(P_1z_1+U_n^\perp).
\end{equation}
To sum up, our $\bz\in \Fix T$ must satisfy
\begin{subequations}
\label{e:210817c}
\begin{align}
z_1 &\in Z + U_1^\perp\\
z_2 &\in z_1+U_2^\perp\\
&\;\;\vdots\\
z_{n-2}&\in z_{n-3}+U_{n-2}^\perp\\
z_{n-1}&\in (z_{n-2}+U_{n-1}^\perp)\cap(P_1z_1+U_n^\perp). 
\end{align}
\end{subequations}

We now show the converse. 
To this end, assume now that our $\bz$ satisfies \cref{e:210817c}. 
Note that $Z^\perp = \overline{U_1^\perp+\cdots + U_n^\perp}$.
Because $z_1 \in Z+U_1^\perp$,
there exists $z\in Z$ and $u_1^\perp\in U_1^\perp$ such that 
$z_1 = z\oplus u_1^\perp$.
Hence $x_1=P_1z_1 = P_1z = z$. 
Next, 
$z_2\in z_1+U_2^\perp$, say 
$z_2=z_1+u_2^\perp = z\oplus(u_1^\perp+u_2^\perp)$,
where $u_2^\perp \in U_2^\perp$. 
Then 
$x_2 = P_2(x_1+z_2-z_1)=P_2(z+u_2^\perp)=P_2z = z$. 
Similarly, there exists also $u_3^\perp\in U_3^\perp,
\ldots,u_{n-1}^\perp\in U_{n-1}^\perp$ such that 
$x_3=\cdots=x_{n-1}=z$ and 
$z_i=z\oplus(u_1^\perp+\cdots +u_i^\perp)$ for 
$2\leq i\leq n-1$. 
Finally, we also have $z_{n-1}=z\oplus u_n^\perp$
for some $u_n^\perp\in U_n^\perp$. 
Thus
$x_n = P_n(x_1+x_{n-1}-z_{n-1})=P_n(2z-(z+u_{n}^\perp))
=P_nz = z$. Altogether, $\bz \in \Fix T$.
We have thus verified the description of $\Fix T$ 
announced in \cref{e:210817e}, using the convenient
notation of the operator $\Psi$ which is easily seen to have closed range.

Next, we observe that 
\begin{equation}
\label{e:210817c+}
D := \menge{(z,\ldots,z)\in X^{n-1}}{z\in Z} = 
Z^{n-1}\cap \Delta,
\end{equation}
where $\Delta$ is the diagonal in $X^{n-1}$ which has projection
$P_\Delta(z_1,\ldots,z_n)=(\bar{z},\ldots,\bar{z})$ 
(see, e.g., \cite[Proposition~26.4]{BC2017}). 
By convexity of $Z$, we clearly have 
$P_\Delta(Z^{n-1})\subseteq Z^{n-1}$.
Because $Z^{n-1}$ is a closed linear subspace of $X^{n-1}$,
\cite[Lemma~9.2]{Deutsch} and \cref{e:210817c+} yield
$P_D = P_{Z^{n-1}}P_\Delta$ and therefore
\begin{equation}
\label{e:210817f}
P_D\bz
= P_{Z^{n-1}}P_\Delta\bz
= \big(P_Z\bar{z},\ldots,P_Z\bar{z}\big). 
\end{equation}

Combining \cref{e:210817e}, \cref{f:orthoP}, \cref{e:210817c+},
and \cref{e:210817f} yields \cref{e:210817d}. 

Finally, observe that $Q_1(P_E\bz)\in U_1^\perp$ by 
\cref{e:bloodyE}.
Thus $Q_1(P_{\Fix T}\bz)\in P_Z\bar{z} + U_1^\perp$ 
and therefore \cref{e:210817g} follows.
\end{proof}

We are now ready for the main convergence result on 
the Malitsky-Tam algorithm.

\begin{theorem}[\bf main result on Malitsky-Tam splitting]
Given $0<\lambda<1$ and $\bz_0=(z_{0,1},\ldots,z_{0,n-1}) \in X^{n-1}$, 
generate the sequence $(\bz_k)_\kkk$ via\footnote{Recall \cref{e:linMTM} 
and \cref{e:linMT} for the definitions of $M$ and $T$.}
\begin{equation}
(\forall\kkk)\quad
\bz_{k+1} = (1-\lambda)\bz_k + \lambda T\bz_k.
\end{equation}
Set 
\begin{equation}
p := \frac{1}{n-1}\big(z_{0,1}+\cdots+z_{0,n-1}\big). 
\end{equation}
Then there exists $\bar{\bz}\in X^{n-1}$ such that 
\begin{equation}
\bar{\bz}_k \to \bar{\bz} \in \Fix T,
\end{equation}
and 
\begin{equation}
\label{e:210817h}
M\bz_k \to M\bar{\bz} = (P_Zp,\ldots,P_Zp) \in X^n. 
\end{equation}
In particular, 
\begin{equation}
\label{e:210817i}
P_1(Q_1\bz_k)= Q_1M\bz_k \to P_Z(p) = \tfrac{1}{n-1}P_Z\big(z_{0,1}+\cdots+z_{0,n-1}\big).
\end{equation}
Consequently, if $x_0\in X$ and 
$\bz_0 = (x_0,\ldots,x_0)\in X^{n-1}$,
then 
\begin{equation}
\label{e:yayyay}
P_1Q_1\bz_k \to P_Zx_0.
\end{equation}
\end{theorem}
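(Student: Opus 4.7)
The plan is to invoke \cref{c:key} with the linear, $\lambda$-averaged nonexpansive operator $T_\lambda := (1-\lambda)\Id + \lambda T$ on the Hilbert space $X^{n-1}$. Linearity of $T$ follows at once from \cref{e:linMTM}--\cref{e:linMT}: each projection $P_i$ is linear, the recursive definition of $M$ is assembled entirely from linear maps, and $T$ is a linear combination of $M$ and $\Id$; nonexpansiveness comes from \cref{f:MT}. Moreover $\Fix T_\lambda = \Fix T$ and $\bz_k = T_\lambda^k \bz_0$.

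First I would apply \cref{c:key} to conclude strong convergence $\bz_k \to \bar{\bz} := P_{\Fix T_\lambda}\bz_0 = P_{\Fix T}\bz_0$, which automatically lies in $\Fix T$. Plugging $\bz_0$ into the projection formula \cref{e:210817d} of \cref{mtfixlemma} even yields the explicit decomposition $\bar{\bz} = (P_Zp,\ldots,P_Zp) \oplus P_E\bz_0$, since the averaged coordinate of $\bz_0$ is exactly $p$.

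Next, because $M$ is linear and hence continuous, $M\bz_k \to M\bar{\bz}$. The fixed-point characterization \cref{e:fixtmt} forces $M\bar{\bz}\in\Delta$, so all $n$ coordinates of $M\bar{\bz}$ agree; their common value is identified via \cref{e:210817g}, which gives $Q_1 M\bar{\bz} = P_1(Q_1 P_{\Fix T}\bz_0) = P_Zp$. This establishes \cref{e:210817h}, and then \cref{e:210817i} is simply its first coordinate. The final identity \cref{e:yayyay} is the specialization $\bz_0=(x_0,\ldots,x_0)$, for which $p=x_0$.

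The heavy lifting is already packaged in \cref{mtfixlemma} and \cref{c:key}, so no real obstacle remains. The one bookkeeping point worth double-checking is that $T$ is genuinely linear, so that \cref{c:key} applies; this I would verify by tracing the recursion in \cref{e:linMTM} once more and observing that every step composes projections and sums.
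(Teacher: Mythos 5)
Your proposal is correct and follows essentially the same route as the paper: apply \cref{c:key} to the averaged linear operator $T_\lambda$ and identify the limit $P_{\Fix T}\bz_0$ via \cref{mtfixlemma} and \cref{e:210817g}. The only (harmless) difference is that you obtain \cref{e:210817h} from continuity of $M$ together with $M\bar{\bz}\in\Delta$ (via \cref{e:fixtmt}), whereas the paper first computes the limit of the first coordinate $Q_1M\bz_k$ and then invokes \cref{e:210817a} to transfer it to the remaining coordinates.
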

\begin{proof}
Set $T_\lambda := (1-\lambda)\Id+\lambda T$ and observe that 
$(\bz_k)_\kkk = (T_\lambda^k\bz)_\kkk$. 
%Also, set $p = (z_{0,1}+\cdots+z_{0,n-1})/(n-1)$. 
Hence, by \cref{c:key} and \cref{mtfixlemma},
\begin{subequations}
\begin{align}
\bz_k&\to P_{\Fix T_\lambda}\bz_0=P_{\Fix T}\bz_0\\
&=(P_Zp,\ldots,P_Zp)\oplus P_E(\bz_0), 
\end{align}
\end{subequations}
where $E$ is as in \cref{mtfixlemma}. 
Hence, using also \cref{e:210817g}, 
\begin{subequations}
\begin{align}
Q_1M\bz_k &= P_1Q_1\bz_k \\
&\to P_1Q_1\big((P_Zp,\ldots,P_Zp)\oplus P_E(\bz_0)\big)\\
&=P_1\big(P_Zp+Q_1(P_E(\bz_0))\big)\\
&\in P_1\big(P_Zp+U_1^\perp\big)\\
&=\{P_1P_Zp\}\\
&=\{P_Zp\},
\end{align}
\end{subequations}
i.e., $Q_1M\bz_k\to P_Zp$. 
Now \cref{e:210817a} yields 
$Q_iM\bz_k\to P_zp$ for every $i\in\{1,\ldots,n\}$.
This yields \cref{e:210817h} and \cref{e:210817i}.

Finally, the ``Consequently'' part is clear because when 
$\bz_0$ has this special form, then $p=x_0$. 
\end{proof}

\subsection{Extension to the consistent affine case} 

In this subsection, we comment on the behaviour of the 
splitting algorithms by  Ryu and 
by Malitsky-Tam in the consistent affine case.
To this end, we shall assume that 
$V_1,\ldots,V_n$ are closed affine subspaces of $X$ with
nonempty intersection:
\begin{equation}
V := V_1\cap V_2\cap \cdots \cap V_n \neq \varnothing.
\end{equation}
We repose the problem of finding a point in $Z$ as 
\begin{equation}
\text{find $x\in X$ such that $0\in (A_1+A_2+\cdots+A_n)x$,}
\end{equation}
where each $A_i = N_{V_i}$.
When we consider Ryu splitting, we also impose $n=3$.
Set $U_i:=V_i-V_i$, which is the \emph{parallel space} of $V_i$.
Now let $v\in V$.
Then $V_i = v+U_{i}$ and hence 
$J_{N_{V_i}}=P_{V_i} = P_{v+U_i}$ satisfies
$P_{v+U_i} = v+P_{U_i}(x-v)=P_{U_i}x+P_{U_i^\perp}(v)$.
Put differently, the resolvents from the affine problem
are translations of the the resolvents from the corresponding linear problem
which considers $U_i$ instead of $V_i$.

The construction of the operator $T\in \{\tryu,\tmt\}$ now shows 
that it is a translation of the corresponding operator from the linear problem.
And finally $T_\lambda = (1-\lambda)\Id+\lambda T$ is a translation of 
the corresponding operator from the linear problem which we denote by $L_\lambda$:
$L_\lambda = (1-\lambda)\Id+\lambda L$, where $L$ is 
either the Ryu operator of the Malitsky-Tam operator of the parallel linear problem,
and  there exists $b\in X^{n-1}$ such that 
\begin{equation}
T_\lambda(x) = L_\lambda(x)+b. 
\end{equation}
By \cref{f:BLM} (applied in $X^{n-1}$), 
there exists a vector $a\in X^{n-1}$ such that 
\begin{equation}
\label{e:para1}
(\forall\kkk)\quad T_\lambda^kx = a + L_\lambda^k(x-a).
\end{equation}
In other words, the behaviour in the affine case is essentially
the same as in the linear parallel case, appropriately shifted by the vector $a$.
Moreover, because $L_\lambda^k\to P_{\Fix L}$ in the parallel linear setting,
we deduce from \cref{f:BLM} that 
\begin{equation}
T_\lambda^k \to P_{\Fix T}
\end{equation}
By \cref{e:para1}, the rate of convergence in the affine case are identical
to the rate of convergence in the parallel linear case. 
Thus, if $(x_k,y_k)_\kkk$ is the governing sequence generated
by Ryu splitting, then 
\begin{equation}
P_{V_1}x_k\to P_V(x_0).
\end{equation}
And if $\bz_k=(z_{k,1},\ldots,z_{k,n-1})_\kkk$ is the sequence 
generated by Malitsky-Tam splitting, then 
\begin{equation}
P_{V_1}Q_1\bz_k\to \tfrac{1}{n-1}P_V(z_{0,1}+\cdots+z_{0,n-1}).
\end{equation}

To sum up this subsection, we note that 
\emph{in the consistent affine case, Ryu's and the Malitsky-Tam algorithm
exhibit the same pleasant convergence behaviour as their linear parallel counterparts!}

It is, however, presently quite unclear how these two algorithms 
behave  when $V=\varnothing$. 

\section{Matrix representation}

\label{sec:matrix}

In this section, we assume that $X$ is finite-dimensional,
say 
\begin{equation}
X=\RR^d.
\end{equation}
The two splitting algorithms considered in this paper are of the form 
\begin{equation}
\label{e:210818a}
T_\lambda^k\to P_{\Fix T}, 
\quad\text{where $0<\lambda<1$ and $T_\lambda = (1-\lambda)\Id+\lambda T$.}
\end{equation}
Note that $T$ is a linear operator; hence, so is $T_\lambda$ and 
by \cite[Corollary~2.8]{BLM}, the convergence in \cref{e:210818a} 
is \emph{linear} because $X$ is finite-dimensional. 
What can be said about this rate?
By \cite[Theorem~2.12(ii) and Theorem~2.18]{BBCNPW2}, a (sharp) 
\emph{lower bound} for the rate of linear 
convergence is the \emph{spectral radius} of $T_\lambda - P_{\Fix T}$, i.e., 
\begin{equation}
\rho\big(T_\lambda- P_{\Fix T}\big) =
\max \big|\{\text{spectral values of $T_\lambda- P_{\Fix T}$}\}\big|,
\end{equation}
while an \emph{upper bound} is the operator norm
\begin{equation}
\big\|T_\lambda- P_{\Fix T}\big\|. 
\end{equation}
The lower bound is optimal and close to the true rate of convergence, 
see \cite[Theorem~2.12(i)]{BBCNPW2}.
Both spectral radius and operator norms of matrices are available
in programming languages such as \texttt{Julia} \cite{Julia} which features 
strong numerical linear algebra capabilities.
In order to compute these bounds for the linear rates, 
we must provide \emph{matrix representations} 
for $T$ (which immediately gives rise to one for $T_\lambda$) and for $P_{\Fix T}$.
In the previous sections, we casually switched back and forth
being column and row vector representations for readability.
In this section, we need to get the structure of the objects right.
To visually stress this, we will use \emph{square brackets} for vectors and 
matrices.

For the remainder of this section, we fix 
three linear subspaces $U,V,W$ of $\RR^d$, with intersection 
\begin{equation}
Z = U\cap V\cap W. 
\end{equation}

We assume that the matrices $P_U,P_V,P_W$ in $\RR^{d\times d}$ are available to us
(and hence so are $P_{U^\perp},P_{V^\perp},P_{W^\perp}$ and $P_Z$,
via \cref{ex:perp} and \cref{c:AD3}, respectively).

\subsection{Ryu splitting}

In this subsection, we consider Ryu splitting.
First, 
the block matrix representation of the operator $M$ occurring 
in Ryu splitting (see \cref{e:linM}) is 
\begin{equation}
\begin{bmatrix}
P_U \;&  0 \\[0.5em]
P_VP_U \;& P_V\\[0.5em]
P_WP_U+P_WP_VP_U-P_W\;\; & P_WP_V-P_W
\end{bmatrix}\in\RR^{3d\times 2d}.
\label{e:RyuMmat}
\end{equation}
Hence, using \cref{e:linT}, 
we obtain the following matrix representation of the
Ryu splitting operator $T=\tryu$:
\begin{subequations}
\begin{align}
T &=\textcolor{black}{\begin{bmatrix}\Id\;&0\\[0.5em]0\;&\Id\end{bmatrix}+}
\begin{bmatrix}
-\Id & 0 &\Id \\[0.5em]
0 & -\Id & \Id
\end{bmatrix}
\begin{bmatrix}
P_U \;&  0 \\[0.5em]
P_VP_U \;& P_V\\[0.5em]
P_WP_U+P_WP_VP_U-P_W\;\; & P_WP_V-P_W
\end{bmatrix}\\[1em]
&= 
\begin{bmatrix}
\textcolor{black}{\Id}-P_U+P_WP_U+P_WP_VP_U-P_W &\;\; P_WP_V-P_W\\[0.5em]
P_WP_U+P_WP_VP_U-P_W-P_VP_U & \;\; \textcolor{black}{\Id+}P_WP_V-P_V-P_W
\end{bmatrix}\in\RR^{2d\times 2d}. 
\end{align}
\end{subequations}
Next, we set, as in \cref{firelemma},
\begin{subequations}
\begin{align}
\Delta &= \menge{[x,x]^\intercal\in \RR^{2d}}{x\in X},\\
E&=\big(U^\perp\times V^\perp) \cap \big(\Delta^\perp+(\{0\}\times W^\perp) \big)\label{e:RyuE}
\end{align}
\end{subequations}
so that, by \cref{e:210815c},
\begin{equation}
\label{e:RyuE4}
P_{\Fix T}\begin{bmatrix}x\\y\end{bmatrix} = 
\begin{bmatrix}P_{Z}x\\0\end{bmatrix}+ P_E\begin{bmatrix}x\\y\end{bmatrix}.
\end{equation}
With the help of \cref{c:AD3}, we see that the first term, $[P_{Z}x,0]^\intercal$, 
is obtained by applying the matrix 
\begin{equation}
\label{e:RyuE1}
\begin{bmatrix}
P_Z & 0 \\
0 & 0 
\end{bmatrix}
= 
\begin{bmatrix}
4P_U(P_U+P_V)^\dagger P_V\big(2P_U(P_U+P_V)^\dagger P_V+P_W\big)^\dagger P_W & 0 \\
0 & 0
\end{bmatrix}
\in\RR^{2d\times 2d}
\end{equation}
to $[x,y]^\intercal$. 
Let's turn to $E$, which is an intersection of two linear subspaces.
The projector of the left linear subspace making up this intersection, $U^\perp\times V^\perp$, 
has the matrix representation
\begin{equation}
\label{e:RyuE3}
P_{U^\perp \times V^\perp} = 
\begin{bmatrix}
\Id-P_U & 0 \\
0 & \Id-P_V
\end{bmatrix}. 
\end{equation}
We now turn to the right linear subspace, 
$\Delta^\perp+(\{0\}\times W^\perp)$, which is a sum
of two subspaces whose complements are
$\Delta^{\perp\perp}=\Delta$ and
$((\{0\}\times W^\perp)^\perp = X\times W$, respectively.
The projectors of the last two subspaces are
\begin{equation}
P_\Delta =
\frac{1}{2}
\begin{bmatrix}
\Id & \Id \\
\Id & \Id
\end{bmatrix}
\;\;\text{and}\;\;
P_{X\times W} =
\begin{bmatrix}
\Id & 0 \\
0 & P_W
\end{bmatrix},
\end{equation}
respectively.
Thus, \cref{c:ADsum} yields
\begin{subequations}
\label{e:RyuE2}
\begin{align}
&P_{\Delta^\perp+(\{0\}\times W^\perp)}\\
&= 
\begin{bmatrix}
\Id & 0 \\
0 & \Id
\end{bmatrix}
- 2\cdot \frac{1}{2}
\begin{bmatrix}
\Id & \Id \\
\Id & \Id
\end{bmatrix}
\left(
\frac{1}{2}
\begin{bmatrix}
\Id & \Id \\
\Id & \Id
\end{bmatrix}
+
\begin{bmatrix}
\Id & 0 \\
0 & P_W
\end{bmatrix}
\right)^\dagger
\begin{bmatrix}
\Id & 0 \\
0 & P_W
\end{bmatrix}\\
&= 
\begin{bmatrix}
\Id & 0 \\
0 & \Id
\end{bmatrix}
- 2
\begin{bmatrix}
\Id & \Id \\
\Id & \Id
\end{bmatrix}
\begin{bmatrix}
{3}\Id & \Id \\
\Id & \Id+2P_W
\end{bmatrix}^\dagger
\begin{bmatrix}
\Id & 0 \\
0 & P_W
\end{bmatrix}. 
\end{align}
\end{subequations}
To compute $P_E$, where $E$ is as in \cref{e:RyuE}, 
we combine \cref{e:RyuE3}, \cref{e:RyuE2} under the umbrella
of \cref{f:AD} --- the result does not seem to simplify so 
we don't typeset it. 
Having $P_E$, we simply add it to \cref{e:RyuE1} to obtain 
$P_{\Fix T}$ because of \cref{e:RyuE4}.

\subsection{Malitsky-Tam splitting}

In this subsection, we turn to Malitsky-Tam splitting for the current setup
--- this corresponds to \cref{ss:MTlin} with $n=3$ and where
we identify $(U_1,U_2,U_3)$ with $(U,V,W)$.

The block matrix representation of $M$ from \cref{e:linMTM} is 
\begin{equation}
\begin{bmatrix}
P_U \;&  0 \\[0.5em]
-P_V(\Id-P_U) \;& P_V\\[0.5em]
P_W(P_U+P_VP_U-P_V)\;\; & -P_W(\Id-P_V)
\end{bmatrix}
\in\RR^{3d\times 2d}.
\label{e:MTMmat}
\end{equation}
Thus, using \cref{e:linMT}, 
we obtain the following matrix representation 
of the Malitsky-Tam splitting operator $T=\tmt$: 
\begin{subequations}
\begin{align}
T &=
\textcolor{black}{\begin{bmatrix}\Id\;&0\\[0.5em]0\;&\Id\end{bmatrix}+}
\begin{bmatrix}
-\Id & \Id & 0 \\[0.5em]
0 & -\Id & \Id
\end{bmatrix}
\begin{bmatrix}
P_U \;&  0 \\[0.5em]
-P_V(\Id-P_U) \;& P_V\\[0.5em]
P_W(P_U+P_VP_U-P_V)\;\; & -P_W(\Id-P_V)
\end{bmatrix}\\[1em]
&= 
\begin{bmatrix}
\textcolor{black}{\Id}-P_U-P_V(\Id-P_U) &\;\; P_V\\[0.5em]
P_V(\Id-P_U)+P_W(P_U+P_VP_U-P_V) & \;\; \textcolor{black}{\Id}-P_V-P_W(\Id-P_V)
\end{bmatrix}\\[1em]
&= 
\begin{bmatrix}
(\Id-P_V)(\Id-P_U) &\;\; P_V\\[0.5em]
(\Id-P_W)P_V(\Id-P_U)+P_WP_U & \;\; (\Id-P_W)(\Id-P_U)
\end{bmatrix}\in\RR^{2d\times 2d}. 
\end{align}
\end{subequations}
Next, in view of \cref{e:210817d}, 
we have 
\begin{equation}
\label{e:zahn4}
P_{\Fix T}
= \frac{1}{2}
\begin{bmatrix}
P_Z & P_Z \\
P_Z & P_Z
\end{bmatrix}
+ P_E,
\end{equation}
where (see \cref{e:bloodyE} and \cref{e:bloodyPsi})
\begin{equation}
\label{e:zahn0}
E = \ran\Psi \cap (X\times W^\perp)
\end{equation}
and 
\begin{equation}
\Psi \colon U^\perp \times V^\perp \to X^2
\colon \begin{bmatrix} y_1\\y_2\end{bmatrix}
\mapsto \begin{bmatrix} y_1\\y_1+y_2\end{bmatrix}. 
\end{equation}
We first note that 
\begin{equation}
\ran \Psi = \ran 
\begin{bmatrix}
\Id & 0 \\
\Id & \Id
\end{bmatrix}
\begin{bmatrix}
P_{U^\perp} & 0 \\
0 & P_{V^\perp}
\end{bmatrix}
=\ran
\begin{bmatrix}
P_{U^\perp} & 0 \\
P_{U^\perp} & P_{V^\perp}
\end{bmatrix}. 
\end{equation}
We thus obtain from \cref{f:Pran} that 
\begin{equation}
\label{e:zahn1}
P_{\ran\Psi} = \begin{bmatrix}
P_{U^\perp} & 0 \\
P_{U^\perp} & P_{V^\perp}
\end{bmatrix}
\begin{bmatrix}
P_{U^\perp} & 0 \\
P_{U^\perp} & P_{V^\perp}
\end{bmatrix}^\dagger.
\end{equation}
On the other hand,
\begin{equation}
\label{e:zahn2}
P_{X\times W^\perp}
= \begin{bmatrix}
\Id & 0 \\
0 & P_W^\perp
\end{bmatrix}
\end{equation}
In view of \cref{e:zahn0}
and \cref{f:AD}, we obtain
\begin{equation}
\label{e:zahn3}
P_E = 2P_{\ran\Psi}\big( P_{\ran\Psi}+P_{X\times W^\perp}\big)^\dagger
P_{X\times W^\perp}.
\end{equation}
We could 
now use our formulas \cref{e:zahn1} and \cref{e:zahn2}
for $P_{\ran\Psi}$ and $P_{X\times W^\perp}$
to obtain a more explicit formula for $P_E$ --- 
but we refrain from doing so as the expressions become unwieldy.
Finally, plugging the formula for $P_Z$ from \cref{c:AD3} 
into \cref{e:zahn4} as well as plugging \cref{e:zahn3} into \cref{e:zahn4} 
yields a formula for $P_{\Fix T}$.

\section{Numerical experiments}

\label{sec:numexp}

We now outline a few experiments conducted to observe the performance of the algorithms outlined in Section~\cref{sec:matrix}. 
Each instance of an experiment involves 3 subspaces $U_i$ of dimension $d_i$ for $i\in\{1,2,3\}$ in $X=\RR^d$. 
By \cite[equation~(4.419) on page~205]{Meyer}, 
\begin{equation}
\dim(U_1+ U_2)=d_1+d_2-\dim(U_1\cap U_2).
\end{equation}
Hence
\begin{equation}
\dim(U_1\cap U_2)=d_1+d_2-\dim(U_1+U_2)\geq d_1+d_2-d.
\end{equation}
Thus $\dim(U_1\cap U_2)\geq 1$ whenever 
\begin{equation}\label{e:U1U2dim}
d_1+d_2\geq d+1.
\end{equation}
Similarly, 
\begin{equation}
\dim(Z)\geq \dim(U_1\cap U_2)+d_3-d\geq d_1+d_2-d+d_3-d=d_1+d_2+d_3-2d.
\end{equation}
Along with \cref{e:U1U2dim}, a sensible choice for $d_i$ satisfies 
\begin{equation}
d_i\geq 1+\lceil2d/3\rceil
\end{equation}
because then $d_1+d_2\geq 2+2\lceil 2d/3\rceil\geq 2+4d/3>2+d$. 
Hence $d_1+d_2\geq 3+d$ and $d_1+d_2+d_3> 3+3\lceil 2d/3\rceil\geq 3+2d$. 
The smallest $d$ that gives proper subspaces 
is $d=6$, for which $d_1=d_2=d_3=5$ satisfy the above conditions.

We now describe our set of 3 numerical experiments designed to 
observe different aspects of the algorithms. 

\subsection{Experiment 1: Bounds on the rates of linear convergence}

\begin{figure}[htbp!]
\centering
\includegraphics[width=0.8\textwidth]{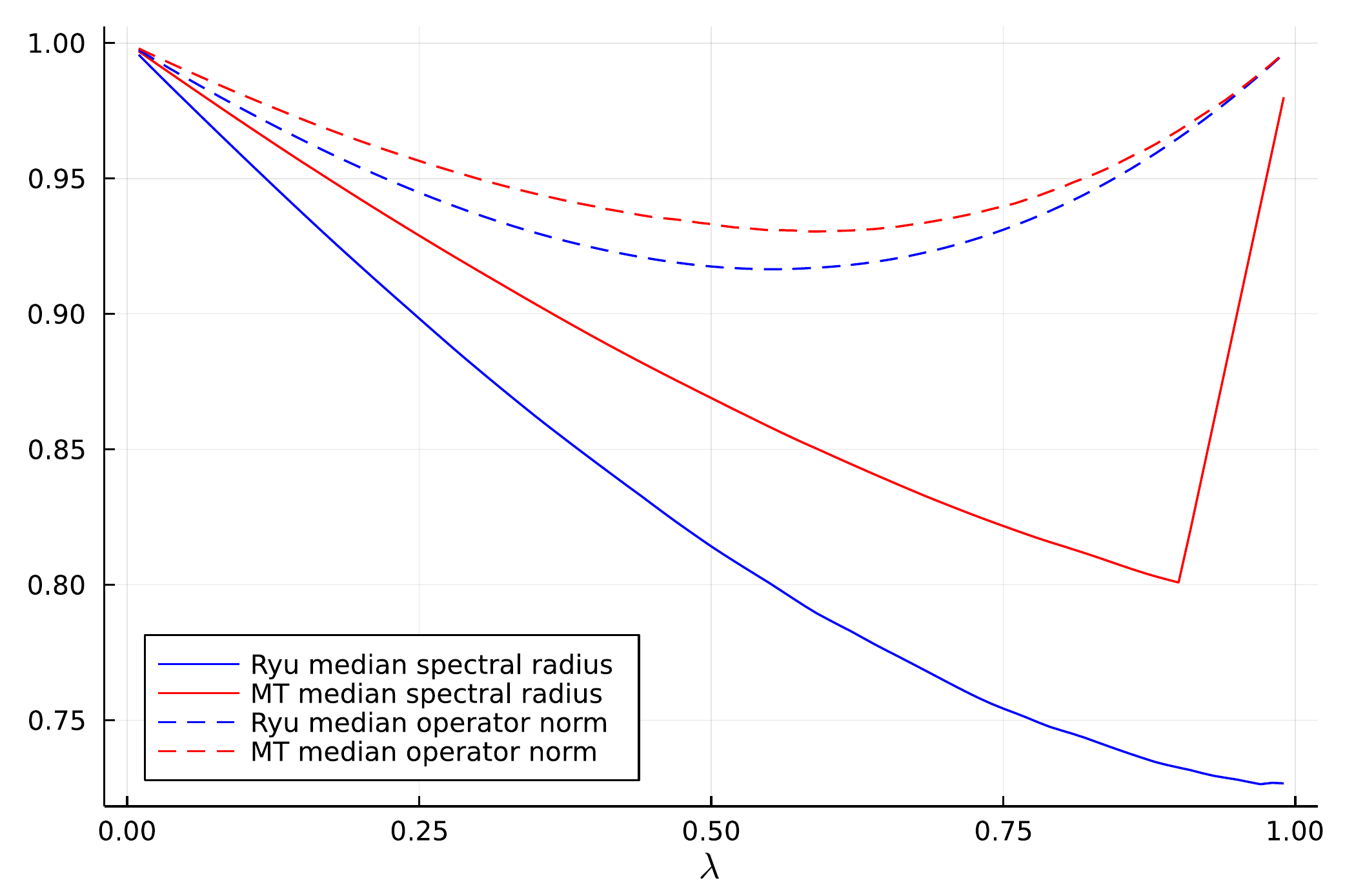}
\caption{Experiment 1: spectral radii and operator norms}
\label{fig:exp1}
\end{figure}

As shown in \cref{sec:matrix}, we have lower and upper bounds on the rate of 
linear convergence of the operator $T_\lambda$. 
We conduct this experiment to observe how these bounds change as we increase 
$\lambda$.  To this end, we 
generate 1000 instances of sets of linear subspaces $U_1,U_2$ and $U_3$. 
This can be done by randomly generating sets of 3 matrices 
$B_1, B_2, B_3$ in $\RR^{5\times 6}$. 
These can be used to define the range spaces of these subspaces, 
which in turn will give us the projection onto $U_i$ using \cite[Proposition~3.30(ii)]{BC2017},
\begin{equation}
P_{U_i}=B_iB_i^\dagger.
\end{equation}
For each instance, algorithm and $\lambda\in 
\menge{0.01\cdot k}{k\in\{1,2,\ldots,99\}}$, 
we obtain the operators $T_\lambda$ and $P_{\Fix T}$ as outlined in \cref{sec:matrix} 
and compute the spectral radius and operator norm of $T_\lambda-P_{\Fix T}$. 
\cref{fig:exp1} reports the average of the spectral radii and operator norms for each $\lambda$. While Ryu sees a decline in the lower bound for the rate of convergence, MT sees a minimizer around $0.9$.

\subsection{Experiment 2: Number of iterations to achieve prescribed accuracy}

\begin{figure}[htbp!]
\centering
\includegraphics[width=0.8\textwidth]{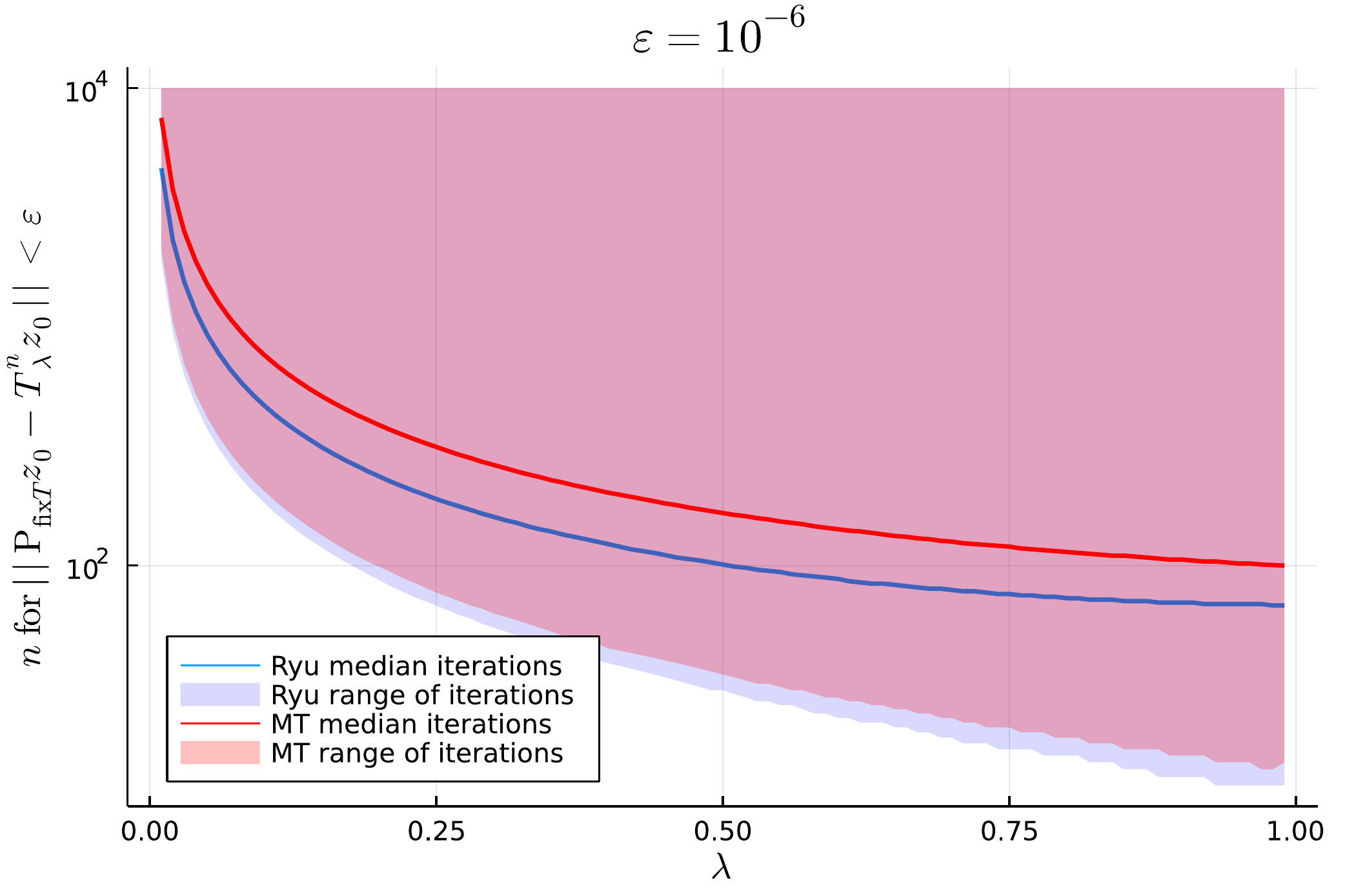}
\caption{Experiment 2: number of iterations for the governing sequence}
\label{fig:exp2a}
\end{figure}

\begin{figure}[htbp!]
\centering
\includegraphics[width=0.8\textwidth]{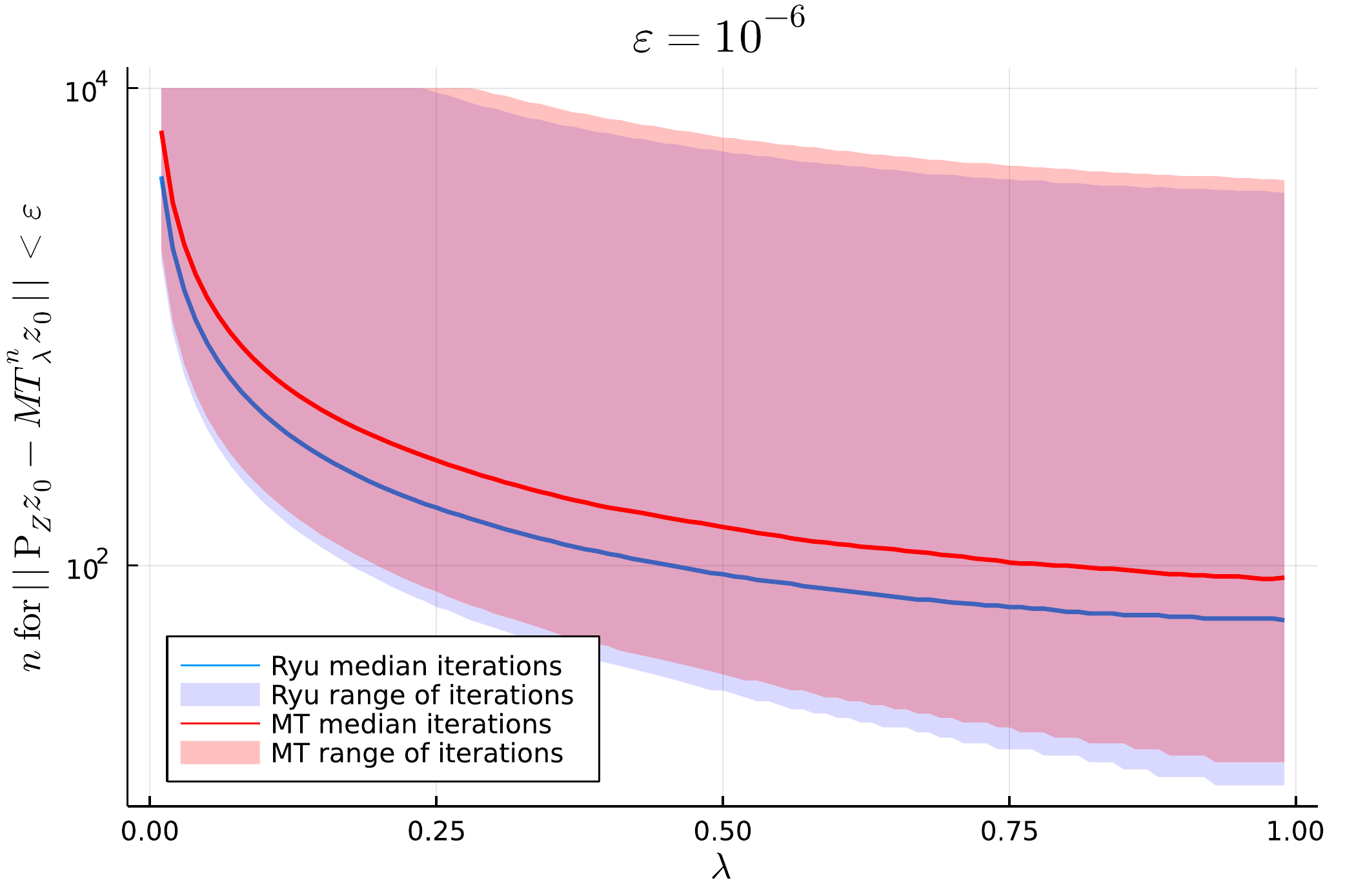}
\caption{Experiment 2: number of iterations for the shadow sequence}
\label{fig:exp2b}
\end{figure}

Because we know 
the limit points of the governing as well as shadow sequences, 
we investigate how changing $\lambda$ affects the number of iterations 
required to approximate 
the limit to a given accuracy.
For these experiments, we fix 100 instances of sets of subspaces $\{U_1,U_2,U_3\}$. 
We also fix 100 different starting points in $\RR^6$.
For each instances of the subspaces, starting point $z_0$ and 
$\lambda\in\menge{0.01\cdot k}{k\in\{1,2,\ldots,99\}}$, 
we obtain the number of iterations (up to a maximum of $10^4$ iterations) required to
achieve $\varepsilon=10^{-6}$ accuracy.

For the governing sequence, the limit $P_{\fix T}z_0$ is used to determine the stopping condition. 
\cref{fig:exp2a} reports the median number of iterations required for each $\lambda$ to achieve the given accuracy. 
For the shadow sequence, we compute the median number of iterations required to achieve $\varepsilon=10^{-6}$ accuracy for the shadow sequence $Mz_k$ with respect to its limit $(P_Zz_0,P_Zz_0,P_Zz_0)$. 
Here $M$ for Ryu and MT can be obtained from \cref{e:RyuMmat} and \cref{e:MTMmat} respectively. See \cref{fig:exp2b} for results. 

For both the algorithms and experiments, 
increasing values of $\lambda$ result in a decreasing number of median iterations required. 
As is evident from the maximum number of iterations required for a fixed lambda, 
the shadow sequence converges before the governing sequence for larger values of $\lambda$.
One can also see that Ryu requires fewer median iterations 
for both the governing and the shadow sequence to achieve the same accuracy as MT for a fixed lambda.

\subsection{Experiment 3: Convergence plots of shadow sequences}

\begin{figure}[htbp!]
\centering
\includegraphics[width=0.8\textwidth]{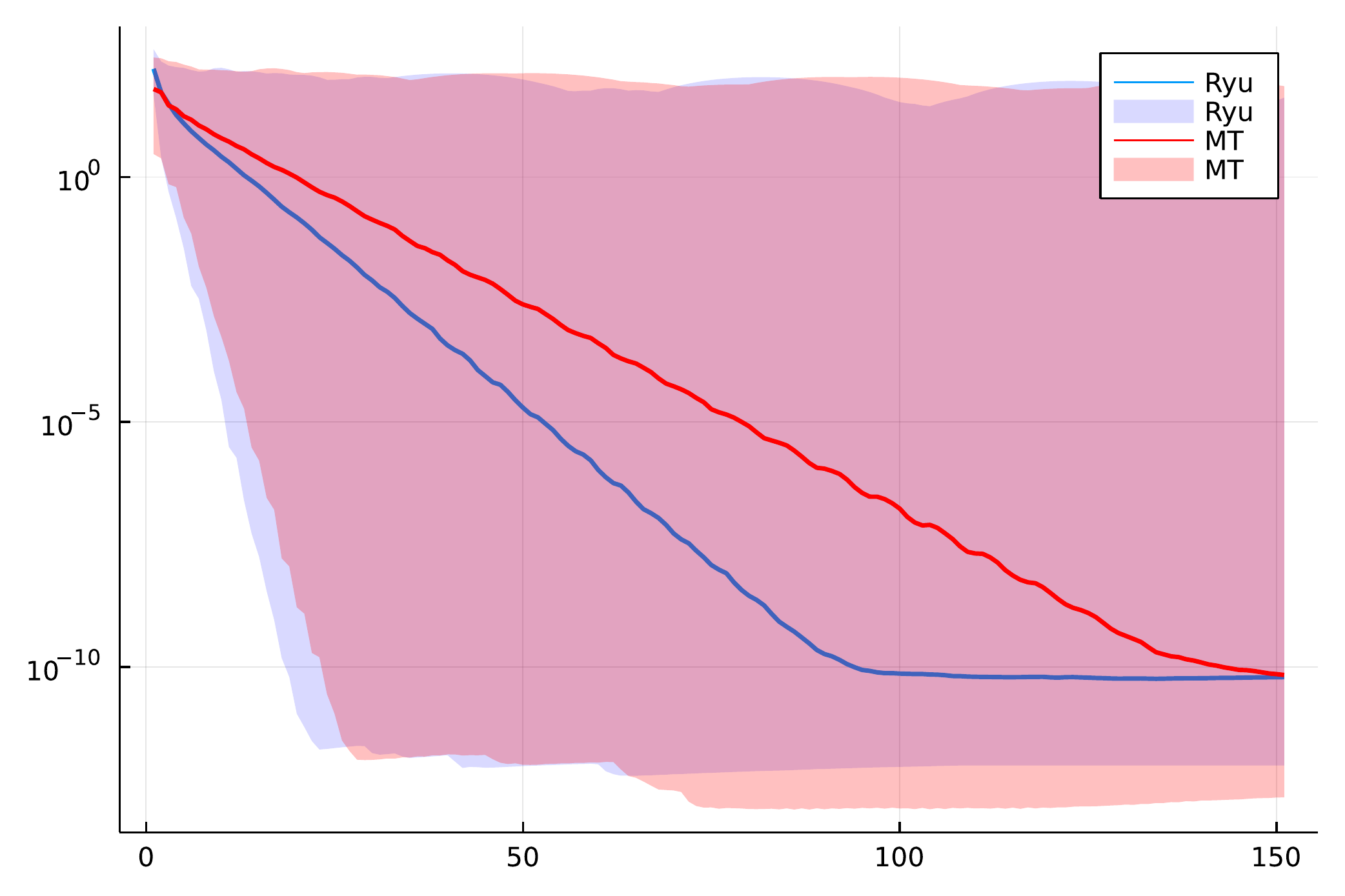}
\caption{Experiment 3: convergence plot of the shadow sequence}
\label{fig:exp3}
\end{figure}

In this experiment, we measure the distance of the shadow sequence from the limit point 
for each iteration to observe the approach of the iterates of the algorithm to the solution. 
We pick the $\lambda$ with respect to which the iterates converge the fastest, 
which is $\lambda=0.99$ for both the algorithms because of \cref{fig:exp2b}. 
Similar to the setup of the previous experiment, 
we fix 100 starting points and 100 sets of subspaces $\{U_1,U_2,U_3\}$. 
We now run the algorithms for 150 iterations for each starting point and each set of subspaces. 
We measure $\norm{Mz_n-(P_Zz_0,P_Zz_0,P_Zz_0)}$ 
for each iteration. 
\cref{fig:exp3} reports the median of $\norm{Mz_i-(P_Zz_0,P_Zz_0,P_Zz_0)}$ for each iteration $i\in\{1,\dots,150\}$.

As can be seen in \cref{fig:exp3}, Ryu converges faster to the solution compared to MT. Both show faint ``rippling'' akin to the one known to occur for the Douglas-Rachford algorithm. 

\section{Conclusion}

\label{sec:end}

In this paper, we investigated the recent splitting methods 
by Ryu and by Malitsky-Tam in the context of normal cone operators
for subspaces. We discovered that both algorithms find not just 
some solution but in fact the \emph{projection} of the starting point onto the
intersection of the subspaces. Moreover, convergence of the iterates 
is \emph{strong} even in infinite-dimensional settings. 
Our numerical experiments illustrated that Ryu's method seems to converge 
faster although Malitsky-Tam splitting is not limited in its applicability to
just 3 subspaces. 

Two natural avenues for future research are the following.
Firstly, when $X$ is finite-dimensional, we know that the convergence
rate of the iterates
is linear. While we illustrated this linear convergence
numerically in this paper, it is open whether there are \emph{natural
bounds for the linear rates} in terms of some version of angle between
the subspaces involved. 
For the prototypical Douglas-Rachford splitting framework, 
this was carried out in  \cite{BBCNPW1} in terms of the \emph{Friedrichs angle}.
Secondly, what can be said in the \emph{inconsistent} affine case?
Again, the Douglas-Rachford algorithm may serve as a guide to what 
the expected results and complications might be; see, e.g., \cite{BM16}.

\end{document}